\newcommand{\SLC}{{\mbox{SL}_2(\bbC)}}
\newcommand{\Tame}{{\mbox{Tame}(\mbox{SL}_2(\bbC))}}
\newcommand{\Bir}{{\mbox{Bir}(\bbP^3(\bbC))}}
\newcommand{\Birn}{{\mbox{Bir}(\bbP^n(\bbC))}}
\newcommand{\BirPlane}{{\mbox{Bir}(\bbP^2(\bbC))}}
\newcommand{\AutPlane}{{\mbox{Aut}(\bbC^2)}}
\newcommand{\bbC}{{\mathbb C}}
\newcommand{\bbN}{{\mathbb N}}
\newcommand{\bbP}{{\mathbb P}}
\newcommand{\bbR}{{\mathbb R}}
\newcommand{\bbZ}{{\mathbb Z}}
 \newcommand{\ra}{\rightarrow}
\theoremstyle{plain}      
\newtheorem{theorem}{Theorem}[section]
\newtheorem{prop}[theorem]{Proposition}
\newtheorem{thm}{Theorem}[section]
\newtheorem{lem}[theorem]{Lemma}
\newtheorem*{thmA}{Theorem A}
\newtheorem*{thmB}{Theorem B}
\newtheorem*{thmC}{Theorem C}
\newdimen\bibindent
\newtheorem*{thm*}{Theorem}
\theoremstyle{definition} 
\newtheorem{rmk}[theorem]{Remark}
\newtheorem{definition}[theorem]{Definition}
 \newtheorem{example}[theorem]{Example}
 \newtheorem*{thx}{Acknowledgement}
\title{\textbf{On the acylindrical hyperbolicity of the tame automorphism group of SL$_2(\bbC)$}}
\author{Alexandre Martin}
\date{}
\begin{document}
\maketitle

 \begin{abstract} 
We introduce the notion of \"uber-contracting element, a strengthening of the notion of strongly contracting element which yields a particularly tractable criterion to show the acylindrical hyperbolicity, and thus a strong form of non-simplicity, of groups acting on non locally compact spaces of arbitrary dimension. We also give a simple local criterion to construct  \"uber-contracting elements for groups acting on  complexes with  unbounded links of vertices. 

As an application, we show the acylindrical hyperbolicity of the tame automorphism group of $\SLC$, a subgroup of the $3$-dimensional Cremona group $\mbox{Bir}(\bbP^3(\bbC))$, through its action on a CAT(0) square complex recently introduced by Bisi--Furter--Lamy.
 \end{abstract}

Cremona groups are groups of birationnal transformations of projective spaces over arbitrary fields, and as such are central objects in birational geometry. These groups have a long history, going back to work of Castelnouvo, Cremona,  Enriques and Noether among others. A lot of work has been devoted to Cremona groups in dimension $2$, and a rather clear picture is now available regarding the structure of such groups: Many results, such as the Tits alternative \cite{CantatCremonaTits}, the computation of their automorphism groups \cite{DesertiAutomorphismsCremona}, the Hopf property \cite{DesertiHopfCremona}, as well as their algebraic  non-simplicity \cite{CantatLamyCremonaNormalSubgroups, LonjouCremonaAcylindricallyHyperbolic}, have been proved. While classical approaches to these groups involve methods from birational geometry and dynamical systems, methods from geometric group theory have proved very fruitful in recent years to unveil more of the structure of these groups. Indeed, the group $\BirPlane$ acts by isometries on a hyperbolic space of infinite dimension, a subspace of the \textit{Picard--Manin space}, and such an action can be used to derive many more properties of the group. For instance,  Cantat--Lamy used methods from hyperbolic geometry (more precisely, ideas reminiscent of small cancellation theory) 
to show the non-simplicity of the plane Cremona group over an arbitrary closed field \cite{CantatLamyCremonaNormalSubgroups}, a result recently extended to arbitrary fields by Lonjou \cite{LonjouCremonaAcylindricallyHyperbolic}. In another direction, Minasyan--Osin used the action of the group $\AutPlane$ of polynomial automorphism group of $\bbC^2$ on the Bass--Serre tree associated to its decomposition as an amalgamated product  (a decomposition due to Jung and van der Kilk \cite{JungDecomposition, vanderKulkDecomposition}) to obtain, among other things, a strong form of non-simplicity for this group \cite{MinasyanOsinTrees}.

By contrast, the situation is much more mysterious in higher dimension, and very few results are known for Cremona groups of dimension at least $3$. A first step would be to understand subgroups of higher Cremona groups. An interesting subgroup of $\Birn$ is the automorphism groups of $\bbC^n$, or more generally the automorphism group of a space \textit{birationally equivalent} to $\bbC^n$. An even smaller subgroup is the group of \textit{tame} automorphisms of $\bbC^n$, that is, the subgroup generated by the affine group of  $\bbC^n$ and transformations of the form $(x_1, \ldots, x_n) \mapsto (x_1 + P(x_2, \ldots, x_n), \ldots, x_n)$ for some polynomial $P$ in $n-1$ variables. This definition of tame automorphisms was recently extended to a general affine quadric threefold by Lamy--V\'en\'ereau \cite{LamyVenereauTameWildThreefold}. Further methods from geometric group theory have been used recently by Bisi--Furter--Lamy to study the structure of the group $\Tame$ of tame automorphisms of $\SLC$, a subgroup of $\Bir$,  through its action on a CAT(0) square complex \cite{LamyCremonaSquareComplexes}. Such complexes have an extremely rich combinatorial geometry, and this action was used to obtain for instance the Tits alternative for the group, as well as the linearisability of its finite subgroups. \\

In this article, we explain how further methods from geometric group theory allow us to get a better understanding of the geometry and structure of $\Tame$. The aim of this article is thus twofold. On the birational geometric side, we wish to convince the reader of the general interest of the wider use of tools coming from geometric group theory in the study of Cremona groups and their subgroups. On the geometric group theoretical side, we wish to convince the reader of the interest of studying a group through its \textit{non-proper} actions on \textit{non locally} finite complexes of \textit{arbitrary} dimension. Indeed, while a lot of work has been done to study groups either through their \textit{proper} actions on metric spaces, or through their actions on simplicial \textit{trees}, few general results and techniques are available to study actions in a more general setting. Allowing non-proper actions raises serious geometric obstacles, as this most often implies working with non locally finite spaces of arbitrary dimension. We show however that, under mild geometric assumptions - assumptions that are satisfied by large classes of complexes with a reasonable combinatorial geometry, it is possible to obtain simple and useful tools to study such general actions. \\

In this article, we shall focus on the hyperbolic-like features of $\Tame$, and more precisely on the notion of \textit{acylindrical hyperbolicity}. It is a theme which has come to the forefront of geometric group theory in recent years: Indeed, it is a notion sufficiently general to encompass large classes of groups (mapping class groups \cite{BowditchTightGeodesics}, $Out(F_n)$ \cite{BestvinaFeighnOutFnAcylindricallyHyperbolic}, many CAT(0) groups \cite{SistoContractingElements}, etc.),  unifying many previously known results, and yet has strong consequences: acylindrically hyperbolic groups are SQ-universal (that is, every countable group embeds in a suitable quotient of the group),
the associated reduced $C^*$-algebra is simple in the case of a countable torsion-free group, etc. \cite{DahmaniGuirardelOsin}. We refer the reader to \cite{OsinAcylindricallyHyperbolic} for more details. It is thus non surprising to witness a wealth of tools being developed to prove the acylindrical hyperbolicity of ever larger classes of groups. 

Acylindrical hyperbolicity is defined in terms of an acylindrical action of a group on a hyperbolic space, a dynamical condition which is generally cumbersome to check, particularly for actions on non locally compact spaces: Indeed, while these conditions for actions on trees can be reformulated in terms of pointwise stabilisers of  pairs of points (this was the original definition of Sela \cite{SelaAcylindrical}), they involve \textit{coarse stabilisers} of pairs of points for more general actions, that is, group elements moving a pair of point by a given amount. Controlling such coarse stabilisers generally requires understanding the set of geodesics between two non-compact metric balls, a substantial geometric obstacle in absence of local compactness.
What is more, few actions naturally associated to a group turn out to be acylindrical (let us mention nonetheless the action of mapping class groups on their associated curve complexes \cite{BowditchTightGeodesics}, as well as the action of Higman groups on $n \geq 5$ generators on their associated CAT(-1) polygonal complexes \cite{MartinHigmanAcylindrical}). Instead, when given an action of a  group presumed to be acylindrically hyperbolic on a geodesic metric space, one generally tries to show that a sufficient criterion is satisfied, namely the existence of a WPD element with a strongly contracting orbit (see \cite[Theorem H]{BestvinaBrombergFujiwara}). This approach was for instance followed in \cite{MinasyanOsinTrees,GruberSistoAcylindricallyHyperbolic,CapraceHumeAcylindrical,BestvinaFeighnOutFnAcylindricallyHyperbolic}. Here again, checking the WPD condition for a given  element turns out to be highly non-trivial  for actions on non-locally compact spaces of dimension at least two. \\

As often in geometric group theory, the situation is much easier to handle in the case of actions on simplicial trees, that is, for groups admitting a splitting. In this case, Minasyan--Osin obtained a very simple and useful criterion \cite[Theorem 4.17]{MinasyanOsinTrees}, which was used to show the acylindrical hyperbolicity of large classes of groups: some one-relator groups, the affine Cremona group in dimension $2$, the Higman group, many $3$-manifold groups, etc. For simplicity, we only state it for amalgamated products: Consider an amalgamated product $G$ of the form $A*_C B$, where the edge group $C$ is strictly contained in both $A$ and $B$ and is \textit{weakly malnormal in $G$}, meaning that there exists an element $g\in G$ such that $C \cap gCg^{-1}$ is finite. Then $G$ is either virtually cyclic or acylindically hyperbolic. 

In order to get an idea on how one could generalise such a criterion to more general actions, it is useful to outline its proof: Considering two distinct edges $e, e'$ of the associated Bass--Serre tree with finite common stabiliser, one can extend the minimal geodesic segment $P$ containing $e$ and $e'$ into a geodesic line $L$  which is the axis of some element $g \in G$ acting hyperbolically. Such an element turns out to satisfy the WPD identity. Indeed, given two balls of radius $r$ whose projection on $L$ are sufficiently far apart with respect to $r$,  any geodesic between two points in these balls must contain a translate of $P$, and one concludes using weak malnormality. 
At the heart of this proof is the very particular geometry of trees, and in particular the existence of cut-points, which ``force'' geodesics to go through a prescribed set of vertices. Such a behaviour cannot a priori be expected from actions on higher-dimensional spaces. \\

In this article, we to show that it is possible to obtain a generalisation of the aforementioned criterion for groups acting on higher dimensional complexes, under a mild geometric assumption on the complex acted upon. Following what happens for trees, we want conditions that force large portions of a geodesic to be prescribed by a coarse information about their endpoints. In particular, we want to force geodesics to go through certain finite subcomplexes. We will be interested in the following strengthening of the notion of strongly contracting element: We consider a hyperbolic element such that one of its axes comes equipped with a set of \textit{checkpoints}, a collection of uniformly finite subcomplexes whose union is coarsely equivalent to the axis and such that for every two points of the spaces whose projections on the axis are far enough, every geodesic between them must meet sufficiently many checkpoints between their respective projections (this will be made precise in Definition \ref{def:ubercontraction}). Such elements will be called \textit{\"uber-contractions}.

The advantage of such a notion of an \"uber-contraction is that it allows for a more tractable criterion to prove the acylindrical hyperbolicity of a group. Recall that the standard criterion of  Bestvina--Bromberg--Fujiwara \cite[Theorem H]{BestvinaBrombergFujiwara} is to find a strongly contracting element satisfying the so-called WPD condition (see Theorem \ref{thm:BestvinaBrombergFujiwara}). If ones considers only \"uber-contractions, it is enough to check a weaker dynamical condition, which is much easier to check, as it is formulated purely in terms of stabilisers of pairs of points. Our main criterion is the following:

\begin{thmA}
	Let $G$ be a group acting by isometries on a geodesic metric space $X$. Let $g$ be an infinite order element which has quasi-isometrically embedded orbits,  and assume that the following holds: 
	\begin{itemize}
		\item $g$ is an \"uber-contraction with respect to a set $S$ of checkpoints,
		\item $g$ satisfies the following weakening of the WPD condition: There exists a constant $m_0$ such that for every point $s \in S$ and every $m \geq m_0$, only finitely many elements of $G$ fix pointwise $s$ and $g^ms$.
	\end{itemize}
	Then $G$ is either virtually cyclic or acylindrically hyperbolic.
\end{thmA}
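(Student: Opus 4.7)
The natural approach is to promote the weak WPD hypothesis into the full WPD condition for $g$, and then apply the Bestvina--Bromberg--Fujiwara criterion (Theorem H of \cite{BestvinaBrombergFujiwara}): being an \"uber-contraction, $g$ is in particular strongly contracting, and already has quasi-isometrically embedded orbits by assumption, so the WPD property is the only missing ingredient to conclude that $G$ is virtually cyclic or acylindrically hyperbolic.

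Fix $\epsilon > 0$ and $x \in X$, and consider the set
\[ F_N := \{ h \in G : d(x, hx) \leq \epsilon \text{ and } d(g^N x, h g^N x) \leq \epsilon \}. \]
I would show that $F_N$ is finite for $N$ large enough. For such $N$, the \"uber-contraction property ensures that any geodesic from $x$ to $g^N x$ passes through a long sequence $s_{i_1}, \ldots, s_{i_k}$ of checkpoints of $S$, with $k$ growing linearly in $N$. Given $h \in F_N$, the $h$-translate of such a geodesic is again a geodesic, with endpoints within $\epsilon$ of $x$ and $g^N x$ respectively; the strong contraction of the axis together with the \"uber-contraction condition then forces this new geodesic to cross checkpoints uniformly close to another ordered sequence of checkpoints on the axis. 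One deduces that $h$ acts as a coherent shift by some integer $n = n(h)$ on this finite family of checkpoints.

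Setting $h' := g^{-n}h$, we obtain an element that maps each $s_{i_j}$ to a checkpoint uniformly close to itself. Since the checkpoints are uniformly finite subcomplexes and only finitely many of them lie within a bounded neighbourhood of any given one, there are only finitely many isometric embeddings of $s_{i_1}$ into nearby checkpoints, and similarly for $s_{i_k}$. Thus, after multiplying $h'$ by one of finitely many elements of $G$, we may assume $h'$ fixes both $s_{i_1}$ and $s_{i_k}$ pointwise. For $N$ sufficiently large, the integer $m$ with $s_{i_k} = g^m s_{i_1}$ exceeds $m_0$, so the weak WPD hypothesis leaves only finitely many possibilities for such $h'$. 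Unwinding the finitely many choices of $n$ and of auxiliary multiplier then yields $|F_N| < \infty$. The crux of the argument, which I expect to require most care, is the coherent-shift step: extracting from the \"uber-contraction hypothesis the rigidity that $h$ must permute the checkpoints along the axis in an order-preserving manner with a well-defined shift, rather than shuffling them arbitrarily; this will ultimately rest on combining the bounded projection estimates for strongly contracting axes with the fact that, at large scale, the checkpoints form a coarsely linearly ordered family along the $g$-axis.
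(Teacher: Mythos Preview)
Your high-level strategy matches the paper's exactly: upgrade the weak WPD hypothesis to the full WPD condition for $g$, and then invoke Bestvina--Bromberg--Fujiwara, using that an \"uber-contraction is automatically strongly contracting. The divergence is in how the WPD upgrade is carried out, and this is where your proposal has a real gap.

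The problematic step is the ``coherent shift''. You want $h\in F_N$ to send each checkpoint point $s_{i_j}$ to something uniformly close to a checkpoint $s_{i_{j+n}}$, so that $g^{-n}h$ nearly fixes all the $s_{i_j}$. But $h$ is an arbitrary element of the coarse stabiliser, not a power of $g$, and there is no reason for $h(s_{i_j})$ to lie anywhere near $\Lambda$. What the \"uber-contraction hypothesis actually gives is only that the \emph{geodesic} $hQ_x$ (whose endpoints are $\epsilon$-close to those of $Q_x$) meets essentially the same collection of checkpoints as $Q_x$ does; it says nothing about where $h$ sends the individual points of $S$. You are conflating ``$hQ_x$ meets $S_j$'' with ``$h(S_j)$ is close to some $S_{j'}$'', and the bounded-projection estimates you allude to do not supply this rigidity. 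So the step you correctly flag as the crux does not merely require care; as formulated it is not true.

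The paper avoids any shift analysis. With $Q_x$ a fixed geodesic from $x$ to $g^Nx$ and $N$ chosen large enough via the coarse Lipschitz projection lemma, the translated geodesic $hQ_x$ must meet at least $m_0(|S|+1)$ of the checkpoints $S_1,\ldots,S_k$ between the projections. By pigeonhole on the $|S|$ points of $S$, one finds two points $s\in S_j$, $s'\in S_{j'}$ on $hQ_x$ with $s'=g^ms$ for some $m\geq m_0$. The paper then records the data $\phi(h)=(s,s',h^{-1}s,h^{-1}s')\in(\bigcup_j S_j)^2\times Q_x\times Q_x$. If $\phi(h_1)=\phi(h_2)$ then $h_2h_1^{-1}$ fixes both $s$ and $s'=g^ms$, hence lies in a finite set by the weak WPD hypothesis; since the target of $\phi$ is finite, so is $F_N$. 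The point is that one never asks where $h$ sends checkpoints, only where on the fixed geodesic $Q_x$ the $h$-preimages of two well-chosen checkpoint points land.
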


With such a theorem at hand, it is now important to understand how to construct \"uber-contractions. Forcing geodesics to go through given complexes is reasonably manageable in a CAT(0) space as geodesics can be understood locally:  In a CAT(0) space, if two geodesics meet along a vertex $v$ and make a very large angle at $v$, then the concatenation of these two geodesics is again a geodesic, and what is more, any geodesic between points close enough to the endpoints of this concatenation will also have to go through $v$. For spaces which do not have such a rich geometry, we also provide a way to construct \"uber-contractions, by mimicking what happens in a space with a CAT(0) geometry: We  want to construct a hyperbolic element with an axis such that the ``angle'' made at some special vertices of this axis is so large that it will force geodesics between two arbitrary points having sufficiently far apart projections on this axis to go through these special vertices. As it turns out, a quite mild geometric condition ensures that such a ``strong concatenation of geodesics'' phenomenon occurs: We say that a complex has a \textit{bounded angle of view} if, roughly speaking, there is a uniform bound on the angle between  two arbitrary  vertices $v$ and $v'$, seen from any point that does not lie on a geodesic between $v$ and $v'$ (see Definition \ref{def:Angle_View}). Having a bounded angle of view holds for CAT(0) metric spaces and other complexes with a more combinatorial notion of non-positive curvature ($C'(1/6)$-polygonal complexes, systolic complexes). It implies in particular a Strong Concatenation Property of combinatorial geodesics (see Definition \ref{def:Strong_Concatenation_Property}), a property also satisfied by hyperbolic complexes satisfying a very weak form of isoperimetric inequality (see Proposition \ref{def:isoperimetric_inequality}).

Under such a mild assumption, we have a very simple way of constructing \"uber-contraction (see Proposition \ref{prop:Criterion_acylindrically_hyperbolic}). This is turn allows us to obtain a second, more local, criterion for acylindrical hyperbolicity which generalises the aforementioned  Minasyan--Osin criterion to actions on very general metric spaces: 

\begin{thmB}[``Link Criterion'' for acylindrical hyperbolicity]
	Let $X$ be a simply connected hyperbolic polyhedral complex satisfying the Strong Concatenation Property, together with an action by polyhedral isomorphisms of a group $G$. Assume that there exists a vertex $v$ of $X$ and a group element $g$ such that: 
	\begin{itemize}
		\item[$1)$] the action of $G_v$ on the $1$-skeleton of the link of $v$ has unbounded orbits (for the simplicial metric on the $1$-skeleton $\mbox{lk}(v)$ where every edge has length $1$),
		\item[$2)$] $G_v$ is weakly malnormal, that is, the intersection $G_v \cap gG_v g^{-1}$ is finite.
	\end{itemize}
	Then $G$ is either virtually cyclic or acylindrically hyperbolic. 
\end{thmB}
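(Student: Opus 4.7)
The plan is to build a hyperbolic element $h$ satisfying the hypotheses of Theorem~A and then invoke that theorem. Since $G_v$ acts on $\mathrm{lk}(v)^{(1)}$ with unbounded orbits it is infinite, so weak malnormality forces $v \neq gv$. I would fix a combinatorial geodesic $\gamma$ from $v$ to $gv$ and let $\alpha \in \mathrm{lk}(v)$ and $\beta \in \mathrm{lk}(gv)$ be its initial and terminal directions (so $g^{-1}\beta \in \mathrm{lk}(v)$). Let $\Theta$ be the angle threshold furnished by the Strong Concatenation Property, beyond which a large-angle concatenation becomes a geodesic and forces nearby geodesics to factor through the concatenation vertex. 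Since $G_v$ acts on $\mathrm{lk}(v)$ by isometries with one (hence every) unbounded orbit, I can choose $k \in G_v$ with $d_{\mathrm{lk}(v)}(\alpha, k^{-1} g^{-1} \beta) > \Theta$, and set $h := gk$, so that $hv = gv$.

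Next I would study the bi-infinite polygonal path $L := \bigcup_{n \in \mathbb{Z}} h^n \gamma$. At the vertex $v$ the outgoing direction along $L$ is $\alpha$ while the incoming direction (that of the final edge of $h^{-1}\gamma$) is $h^{-1}\beta = k^{-1} g^{-1} \beta$, so the angle at $v$ is exactly $d_{\mathrm{lk}(v)}(\alpha, k^{-1} g^{-1} \beta) > \Theta$. A symmetric computation, using that $g$ identifies $\mathrm{lk}(v)$ isometrically with $\mathrm{lk}(gv)$ and that $k$ is an isometry of $\mathrm{lk}(v)$, gives the same bound at $hv$, and $h$-equivariance propagates the bound to every vertex $h^n v$. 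Proposition~\ref{prop:Criterion_acylindrically_hyperbolic} then promotes $L$ to a bi-infinite geodesic, which is automatically an axis of $h$; consequently $h$ has infinite order with quasi-isometrically embedded orbits, and is an \"uber-contraction relative to the checkpoint set $S := \{h^n v : n \in \mathbb{Z}\}$.

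It then remains to check the weak WPD condition. Fix $s = h^n v \in S$ and take $m$ larger than the threshold supplied by the \"uber-contraction. If $\phi \in G$ fixes $s$ and $h^m s$ pointwise, then $\phi$ sends the axis segment $L|_{[s, h^m s]}$ to some geodesic from $s$ to $h^m s$, which by \"uber-contraction must pass through each intermediate checkpoint $h^{n+i} v$ for $0 \le i \le m$. Since $\phi$ is an isometry fixing $s$, both $h^{n+i} v$ and $\phi(h^{n+i} v)$ occupy the parameter $d(s, h^{n+i}v)$ along this image geodesic, forcing $\phi(h^{n+i} v) = h^{n+i} v$ for every $i$. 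Taking $i = 0, 1$ places $\phi$ in $h^n (G_v \cap G_{hv}) h^{-n} = h^n (G_v \cap g G_v g^{-1}) h^{-n}$, a finite group by weak malnormality; the weak WPD condition is thus established and Theorem~A yields the claimed dichotomy.

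The main subtle point is the angle synchronisation in the second paragraph: it is precisely the twist $h = gk$, with $k \in G_v$, that makes the backward angle at $v$ and the forward angle at $hv$ both equal to the single large quantity $d_{\mathrm{lk}(v)}(\alpha, k^{-1} g^{-1} \beta)$. Absent this twist there is no reason either angle should be large, and the Strong Concatenation Property would fail to engage. Once the twist is chosen correctly, Proposition~\ref{prop:Criterion_acylindrically_hyperbolic} does the geometric heavy lifting and the weak WPD check collapses cleanly to weak malnormality of $G_v$, exactly in the spirit of the Minasyan--Osin tree argument.
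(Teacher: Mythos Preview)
Your approach is essentially the paper's: twist $g$ by an element of $G_v$ to force a large link-angle, so that the concatenated path becomes a geodesic axis and the translates of $v$ serve as checkpoints; then reduce the weak WPD condition to weak malnormality. Two small slips are worth flagging. First, your appeal to ``Proposition~\ref{prop:Criterion_acylindrically_hyperbolic}'' is circular, since that \emph{is} the statement you are proving; what you actually need there is the Strong Concatenation Property (to see that $L$ is a bi-infinite geodesic) together with Lemma~\ref{lem:Local_Criterion} (to obtain the \"uber-contraction). Second, in the weak WPD check the \"uber-contraction only forces the image geodesic through checkpoints $h^{n+i}v$ lying at distance greater than the error constant $L$ from \emph{both} endpoints, so ``$i=1$'' is not guaranteed when $d(v,hv)\le L$; the fix is simply to take two \emph{consecutive} checkpoints $h^{n+i}v,\,h^{n+i+1}v$ in the middle of the segment (any fixed $i$ with $L<i\,d(v,hv)<(m-1)\,d(v,hv)-L$ works for all $\phi$ simultaneously), which still lands $\phi$ in a single conjugate of $G_v\cap gG_vg^{-1}$.
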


It is this criterion that we use to prove the acylindrical hyperbolicity of $\Tame$, using its aforementioned action on a CAT(0) square complex.

\begin{thmC}
	The group  $\Tame$ is acylindrically hyperbolic. In particular, it is SQ-universal and admits free normal subgroups.
\end{thmC}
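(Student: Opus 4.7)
The plan is to apply Theorem B to the CAT(0) square complex $X$ constructed by Bisi--Furter--Lamy \cite{LamyCremonaSquareComplexes}, on which $\Tame$ acts by polyhedral isomorphisms. Since $X$ is CAT(0), it is automatically simply connected, and by the discussion preceding Theorem B the bounded angle of view of CAT(0) complexes yields the Strong Concatenation Property. The first structural point to verify is therefore that $X$ is hyperbolic. Since $X$ is two-dimensional, this reduces, by the Flat Plane Theorem, to excluding isometrically embedded Euclidean planes in $X$. I would establish this by exploiting the combinatorial description of the squares and of vertex links in the Bisi--Furter--Lamy complex to check that no such flat can appear, analogously to what happens for many CAT(0) square complexes built from amalgamated-product-like structures.

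The heart of the argument is the verification of the two local hypotheses of Theorem B. I would single out a vertex $v$ whose stabilizer $G_v$ is one of the large algebraic subgroups appearing naturally in the description of $\Tame$ (either of affine or of semi-simple type, depending on the vertex class in $X$). The link $\mbox{lk}(v)$ inherits a natural action of $G_v$, and condition $(1)$ of Theorem B then reduces to showing that this action on the $1$-skeleton of $\mbox{lk}(v)$ (with the simplicial metric) has unbounded orbits. I expect this to be verified by exhibiting an explicit element of $G_v$ whose iterates move a fixed vertex of the link arbitrarily far in the simplicial metric --- typically by leveraging the rich action of an algebraic subgroup like $\SLC$ or an affine group on a combinatorial object indexing the neighbours of $v$.

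The main obstacle is condition $(2)$, the weak malnormality of $G_v$: producing an explicit $g \in \Tame$ such that $G_v \cap gG_vg^{-1}$ is finite. The strategy is to choose $g$ as a tame automorphism moving $v$ to a vertex $g\cdot v$ of sufficiently different combinatorial type that any element fixing both $v$ and $g\cdot v$ is forced to preserve a very rigid configuration on $\SLC$, making the pointwise stabiliser finite. Reducing this to a finiteness statement for fixed loci of tame automorphisms of $\SLC$ and checking it concretely will require both the explicit algebraic description of vertex stabilizers from \cite{LamyCremonaSquareComplexes} and the normal forms for tame automorphisms of $\SLC$ developed by Lamy--V\'en\'ereau \cite{LamyVenereauTameWildThreefold}; this step is expected to be the most delicate.

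Once $(1)$ and $(2)$ are checked, Theorem B yields the dichotomy that $\Tame$ is either virtually cyclic or acylindrically hyperbolic. The first alternative is immediately ruled out, since $\Tame$ contains the affine subgroup of $\SLC$ and, in particular, non-abelian free subgroups (a fact already implicit in the Tits alternative established in \cite{LamyCremonaSquareComplexes}). Hence $\Tame$ is acylindrically hyperbolic, and SQ-universality together with the existence of non-trivial free normal subgroups follows from the standard consequences of acylindrical hyperbolicity for countable groups \cite{DahmaniGuirardelOsin}.
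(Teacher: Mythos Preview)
Your overall strategy---apply the Link Criterion (Theorem~B) to the Bisi--Furter--Lamy square complex---is exactly what the paper does. However, several of the steps you plan to carry out from scratch are already theorems in \cite{LamyCremonaSquareComplexes} and are simply quoted: both the hyperbolicity of $X$ and the fact that the stabiliser of a type~$1$ vertex acts with unbounded orbits on its link are results of that paper, so no Flat Plane argument or explicit link computation is needed.

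The substantive divergence is in the weak malnormality step, and here your plan is vaguer and aimed in a slightly misleading direction. You propose to move $v$ to a vertex ``of sufficiently different combinatorial type'' and to invoke normal forms from \cite{LamyVenereauTameWildThreefold}. The paper does neither: it takes a very explicit hyperbolic element $g$ (written down in coordinates), so that $v$ and $g^2v$ are both type~$1$ vertices sitting at opposite corners of an isometrically embedded $4\times4$ grid $K$. The argument then proceeds in two concrete steps. First, one computes directly that for two adjacent squares $C$ and $gC$ in $K$, the intersection $\mathrm{Stab}(C)\cap g\,\mathrm{Stab}(C)\,g^{-1}$ is finite: the pointwise stabiliser of a square is an explicit $4$-parameter family, and the equation $gf=f'g$ forces the continuous parameters to vanish and the discrete ones to be roots of unity. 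Second, one uses the CAT(0) cube-complex fact that the combinatorial interval between $v$ and $g^2v$ is exactly $K$ (intervals embed in $\bbZ^2$), so any element fixing both vertices virtually fixes $K$ pointwise, hence fixes $C$ and $gC$. This reduction from vertex stabilisers to square stabilisers via the interval geometry is the key idea you are missing; it replaces any appeal to normal forms by a short coordinate computation.
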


The article is organised as follows. In Section $1$, after recalling standard definitions and results about acylindrical hyperbolicity, we introduce \"uber-contractions and prove Theorem A. 
In Section $2$, we introduce the Strong Concatenation Property and prove Theorem B. Finally, Section $3$ deals with the acylindrical hyperbolicity of $\mbox{Tame}(\mbox{SL}_2)$ by means of its action on the CAT(0) square complex introduced by Bisi--Furter--Lamy.

\begin{thx}
	We gratefully thank S. Lamy for  remarks on a first version of this article, as well as I. Chatterji for many discussions and suggestions about this article.
	This work was partially supported by the European Research Council (ERC) grant no. 259527 of G. Arzhantseva and by the Austrian Science Fund (FWF) grant M 1810-N25.
\end{thx}

\section{A criterion for acylindrical hyperbolicity via \"uber-contractions}

In this section, we give a tractable criterion implying acylindrical hyperbolicity for groups acting on (not necessarily locally compact)  geodesic metric spaces. 

\subsection{Contracting properties of quasi-lines}

As many others, our criterion relies on the existence of of a group element whose orbits possesses hyperbolic-like features. We start by recalling various ``contracting'' poperties of a quasi-line in a metric space.

\begin{definition}
	Let $X$ be a metric space and $\Lambda$ a quasi-line of $X$, i.e. the image by a quasi-isometric embedding of the real line. For a closed subset $Y$ of $X$, we denote by $\pi_\Lambda(Y)$ the set of points of $\Lambda$ realising the distance to $Y$, called the \textit{closest-point projection of $Y$ on $\Lambda$.}
	
	The quasi-line $\Lambda$ is \textit{Morse} if for every $K, L \geq 0$ there exists a constant $C(K,L)$ such that any $(K, L)$ quasi-geodesic with endpoints in $\Lambda$ stays in the $C(K,L)$-neighbourhood of $\Lambda$. We say that an isometry of $X$ is Morse if it is a \textit{hyperbolic} isometry, i.e. its has quasi-isometrically embedded orbits, and if one (hence every) of its orbits is Morse.
	
	The quasi-line $\Lambda$ is \textit{strongly contracting} if there exists a constant $C$ such that every ball of $X$ disjoint from $\Lambda$ has a closest-point projection on $\Lambda$ of diameter at most $C$.  We say that an isometry of $X$ is strongly contracting if it is a hyperbolic isometry and if one (hence every) of its orbits is strongly contracting.
\end{definition}

\begin{rmk}\label{rmk:strongly_contracting_Morse}
	A strongly contracting quasi-geodesic is Morse.
\end{rmk}

We now introduce a strengthening of the notion of strongly contracting isometry, which is central in this article.

\begin{definition}[system of checkpoints, \"uber-contracting isometry]\label{def:ubercontraction}
Let $X$ be a geodesic metric space, let $h$ be an isometry of $X$ with quasi-isometrically embedded orbits. A {\it system of checkpoints} for $h$ is the data of a finite subset $S$ of $X$, an \textit{error constant} $L \geq 0$, and a quasi-isometry $f: \Lambda:=\bigcup_{i \in \bbZ} h^iS \ra \bbR$ such that the following holds: 

 Let $x, y$ be points of $X$ and $x', y'$ be projections on $\Lambda$ of $x, y$ respectively. For every \textit{checkpoint} $S_i := h^i S, i \in \bbZ$ such that:
\begin{itemize}
	\item $S_i$ \textit{coarsely separates} $x'$ and $y'$ , that is, $f(x')$ and $f(y')$ lie in different unbounded connected components of $\bbR \setminus f(S_i)$,
	\item $S_i$ is at distance at least $L$ from $x'$ and $y'$,
\end{itemize}
then every geodesic between $x$ and $y$ meets $S_i$.  

A hyperbolic isometry $h$ of $X$ is \textit{\"uber-contracting}, or is an \textit{\"uber-contraction}, if it admits a system of checkpoints. 
\end{definition}

\begin{example}
If $X$ is a simplicial tree and $h$ is a hyperbolic isometry, the $h$-translates of any vertex on the axis of $h$ yields a system of checkpoints. 
	\end{example}
	
		\begin{example}
	By standard arguments of hyperbolic geometry, if $X$ is a $\delta$-hyperbolic geodesic metric space and $h$ is a hyperbolic isometry, the $h$-translates of any ball of radius $2\delta$  yields a system of checkpoints.
\end{example}

We mention a couple of immediate properties:

\begin{rmk} If $h$ is an \"uber-contraction and $\Lambda$ the $\langle h \rangle$-orbit of some finite subset, then there is coarsely well-defined closest-point projection on $\Lambda$, as the diameter of the set of  projections of a given point is uniformly bounded above.
 \end{rmk}
\begin{rmk}\label{rmk:uber_strong} An \"uber-contracting isometry is strongly contracting. In particular, it is Morse by Remark \ref{rmk:strongly_contracting_Morse}.
 \end{rmk}

\subsection{Acylindrical hyperbolicity in presence of \"uber-contractions}

We start by recalling some standard definitions.

\begin{definition}[acylindricity, acylindrically hyperbolic group]
 Let $G$ be a group acting on a geodesic metric space $X$. We say that the action is \textit{acylindrical} if for every $r \geq 0$ there exist constants $L(r), N(r) \geq 0$ such that for every points $x, y$ of $X$ at distance at least $L(r)$, there are at most $N(r)$ elements $h$ of $G$ such that $d(x, hx), d(y,hy) \leq r$.
 
 A non virtually cyclic group is \textit{acylindrically hyperbolic} if it admits an acylindrical action with unbounded orbits on a hyperbolic metric space.
\end{definition}

Our goal is to obtain a criterion for acylindrical hyperbolicity for groups admitting \"uber-contractions under additional assumptions. We start by recalling some standard criterion.

\begin{definition}
Let $G$ be a group acting on a geodesic metric space $X$.	Let $g$ be an  element of $G$ of infinite order  with quasi-isometrically embedded orbits. We say that $g$ satisfies the \textit{WPD condition} if for every $r \geq 0$ 
	and every point $x$ of $X$, there exists an integer $m_0$ such that there exists only finitely many elements $h$ of $G$ such that $d(x, hx), d(g^{m_0}x,hg^{m_0}x) \leq r$.
\end{definition}
 \begin{rmk}
  If $g$ is a Morse element, then the WPD condition is equivalent to the following strengthening, by \cite[Lemma 2.7]{SistoContractingElements}:  For every $r \geq 0$ 
  	and every point $x$ of $X$, there exists an integer $m_0$ such that for every $m \geq m_0$, there exists only finitely many elements $h$ of $G$ such that $d(x, hx), d(g^{m}x,hg^{m}x) \leq r$.
 \end{rmk}

Let us now recall a useful criterion of Bestvina--Bromberg--Fujiwara to prove the acylindrical hyperbolicity of a group: 

\begin{thm}[{\cite[Theorem H]{BestvinaBrombergFujiwara}}]\label{thm:BestvinaBrombergFujiwara}
	 Let $G$ be a group acting by isometries on a geodesic metric space $X$ and let $g$ be an infinite order element with quasi-isometrically embedded orbits. Assume that the following holds: 
	 \begin{itemize}
	 	\item $g$ is a strongly contracting element.
	 	\item $g$ satisfies the WPD condition.
	 \end{itemize}
	 Then $G$ is either virtually cyclic or acylindrically hyperbolic.
\end{thm}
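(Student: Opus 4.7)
The plan is to construct a hyperbolic space on which $G$ admits an acylindrical action with unbounded orbits; acylindrical hyperbolicity will then follow (assuming $G$ is not virtually cyclic) from Osin's characterisation of acylindrically hyperbolic groups as those admitting such an action. The natural candidate, following the strategy of Bestvina--Bromberg--Fujiwara, is a \emph{projection complex} built from the collection of $G$-translates of a quasi-axis of $g$.

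First I would fix an orbit $\Lambda$ of $\langle g \rangle$---a strongly contracting quasi-line by hypothesis---and consider the $G$-orbit $\mathcal{Y} = \{ h\Lambda : h \in G \}$. Conjugation preserves strong contraction constants, so every element of $\mathcal{Y}$ is strongly contracting with uniform constants. For distinct $Y, Y' \in \mathcal{Y}$, one defines the closest-point projection $\pi_Y(Y') \subseteq Y$ (coarsely well-defined and of uniformly bounded diameter by strong contraction) and sets $d_Z(X,Y) := \mathrm{diam}(\pi_Z(X) \cup \pi_Z(Y))$. The central step is to verify the projection axioms of Bestvina--Bromberg--Fujiwara: a Behrstock-type inequality controlling triples (at most one of $d_Z(X,Y)$, $d_X(Y,Z)$, $d_Y(X,Z)$ is large) together with a finiteness condition on the set of pairs with large projection distance from a given $Z$. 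Both follow from strong contraction via standard projection estimates, analogous to the subsurface projection estimates in the mapping class group setting.

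Once the axioms are in place, the associated projection complex $\mathcal{P}_K(\mathcal{Y})$, for a sufficiently large threshold constant $K$, is a quasi-tree---in particular hyperbolic---on which $G$ acts by permuting vertices. The element $g$ fixes the vertex $\Lambda \in \mathcal{Y}$ while translating along $\Lambda$ inside $X$, and one checks that this forces its action on $\mathcal{P}_K$ to be loxodromic by producing sufficiently many $G$-translates of $\Lambda$ on which successive powers $g^i$ act with large projection distance. To upgrade the action to an acylindrical one, I would translate the WPD condition on $X$ into a finiteness condition on stabilisers of pairs of sufficiently distant vertices of $\mathcal{P}_K$: if $h \in G$ coarsely fixes a far-apart pair $(Y_1, Y_2)$, then $h$ must coarsely stabilise long portions of a common path realising their distance in $X$, and WPD bounds the number of such $h$.

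The main obstacle is the careful verification of the projection axioms from strong contraction alone, since this requires showing that projections between distinct strongly contracting quasi-lines behave essentially like projections between disjoint subtrees of a tree, despite the ambient space $X$ being arbitrary. Once this technical core is established, the passage from WPD on $X$ to acylindricity on $\mathcal{P}_K$ is largely formal, as is the final invocation of Osin's theorem that a non-virtually-cyclic group admitting an acylindrical action on a hyperbolic space with an unbounded orbit is acylindrically hyperbolic.
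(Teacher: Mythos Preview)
The paper does not prove this theorem at all: it is quoted verbatim as \cite[Theorem H]{BestvinaBrombergFujiwara} and used as a black box in the proof of Theorem~\ref{prop:General_Criterion_acylindrically_hyperbolic}. So there is no ``paper's own proof'' to compare against; your outline is in fact a sketch of the original Bestvina--Bromberg--Fujiwara argument, not of anything in this paper.

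That said, one point in your sketch is slightly off. You write that you would ``upgrade the action to an acylindrical one'' on $\mathcal{P}_K(\mathcal{Y})$ by bounding the number of elements coarsely fixing a far-apart pair of vertices. This is not how the argument runs, and in general the $G$-action on the projection complex (or on the associated quasi-tree of metric spaces) need not be acylindrical. What one actually shows is that $g$ acts as a \emph{loxodromic WPD element} on this hyperbolic space; the WPD condition transfers from $X$ to the quasi-tree precisely because coarse stabilisers of long segments there correspond to coarse stabilisers of long segments of $\Lambda$ in $X$. One then invokes the theorem of Dahmani--Guirardel--Osin (or equivalently Osin's characterisation) that a non-virtually-cyclic group containing a loxodromic WPD element for some action on a hyperbolic space is acylindrically hyperbolic. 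The acylindrical action whose existence is asserted by the definition lives on yet another space, produced by that general machinery, not on $\mathcal{P}_K$ itself. Apart from this correction, your plan is the right one and matches the cited source.
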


We are now ready to state our main criterion: 

\begin{thm}[Criterion for acylindrical hyperbolicity]\label{prop:General_Criterion_acylindrically_hyperbolic}
	Let $G$ be a group acting by isometries on a geodesic metric space $X$. Let $g$ be an infinite order element with quasi-isometrically embedded orbits. Assume that the following holds: 
	\begin{itemize}
		\item $g$ is  \"uber-contracting (with  respect to a system of checkpoints $(h^iS)_{i \in \bbZ}$),
		\item $g$ satisfies the following weakening of the WPD condition: There exists a constant $m_0$ such that for every point $s \in S$ and every $m \geq m_0$, only finitely many elements of $G$ fix pointwise $s$ and $g^{m}s$.
	\end{itemize}
	Then $G$ is either virtually cyclic or acylindrically hyperbolic.
\end{thm}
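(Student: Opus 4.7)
The plan is to reduce the statement to the Bestvina--Bromberg--Fujiwara criterion (Theorem \ref{thm:BestvinaBrombergFujiwara}), which requires $g$ to be both strongly contracting and to satisfy the full WPD condition. Strong contraction is immediate from Remark \ref{rmk:uber_strong}, so the entire task is to deduce the WPD condition from the weakened stabiliser condition in the hypothesis, using the über-contracting structure as a coarse substitute for the cut-point behaviour in trees.

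To verify WPD, we fix a basepoint which we may take in $S$, together with a radius $r > 0$, and suppose for contradiction that for some arbitrarily large $m$ there exist infinitely many distinct elements $h_n \in G$ with $d(x, h_n x), d(g^m x, h_n g^m x) \leq r$. Since $g$ is Morse (Remarks \ref{rmk:uber_strong} and \ref{rmk:strongly_contracting_Morse}), each image geodesic $h_n[x, g^m x]$ stays within a uniform neighbourhood of $[x, g^m x]$, and its endpoints project on $\Lambda$ within bounded distance of the projections of $x$ and $g^m x$. Applying the über-contracting hypothesis to both geodesics, each of them passes through every checkpoint $S_i$ for $i$ in some large interval of indices. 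Exploiting the isometric arc-length parametrisations of the two geodesics together with the quasi-isometry $f \colon \Lambda \to \bbR$ positioning the checkpoints, we arrange that each $h_n$ induces a bounded-shift correspondence $\sigma_n$ on indices and an assignment of specific points: for each $i$ in a suitably large interval, $h_n$ sends some specific element of $S_{\sigma_n(i)} = g^{\sigma_n(i)} S$ to some specific element of $S_i$.

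Since $S$ is finite, this local checkpoint data (the shift together with the pre-image and image in $S$) takes only finitely many values at each index. A Ramsey-type pigeonhole over $n$ and over two widely separated indices $i < j$ with $j - i \geq m_0$ produces $n \neq n'$ for which $h_n$ and $h_{n'}$ agree on the local data at both $S_i$ and $S_j$, with the additional pigeonhole over $S$ ensuring that the underlying element $s \in S$ is the same at both checkpoints. Then $h_n$ and $h_{n'}$ send $g^i s$ to a common point of $S_i$ and $g^j s$ to a common point of $S_j$, so $h_n^{-1} h_{n'}$ fixes pointwise both $g^i s$ and $g^j s$. Conjugating by $g^{-i}$ yields infinitely many distinct elements of $G$ fixing pointwise $s$ and $g^{j-i} s$, which contradicts the weak WPD hypothesis since $j - i \geq m_0$; Theorem \ref{thm:BestvinaBrombergFujiwara} then completes the proof.

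The hard step is the second paragraph: producing the correspondence $\sigma_n$ and, most importantly, rigidifying the Morse closeness of $h_n S_{\sigma_n(i)}$ to $S_i$ (which a priori only gives bounded-distance statements) into the exact matching of specific points needed to make $h_n^{-1} h_{n'}$ fix the relevant points pointwise. This requires carefully combining the uniform finiteness of checkpoints, the discrete structure of $\Lambda$ imposed by the quasi-isometry $f$, and the matching of arc-length parameters between $[x, g^m x]$ and its $h_n$-image in order to show that the intersection of $h_n[x, g^m x]$ with each $S_i$ coincides with the $h_n$-image of a distinguished point of a single checkpoint. Once this rigidification is in place, the two layers of pigeonhole and the reduction to the weak WPD hypothesis are essentially mechanical.
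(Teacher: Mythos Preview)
Your overall strategy---reduce to Theorem~\ref{thm:BestvinaBrombergFujiwara} by upgrading the weak pointwise hypothesis to full WPD via a pigeonhole over checkpoints---matches the paper's, but your ``rigidification'' step contains a genuine gap. You want each $h_n$ to carry a point of some checkpoint $S_{\sigma_n(i)}$ to a point of $S_i$. The \"uber-contracting hypothesis, however, only says that the \emph{geodesic} $h_n[x,g^mx]$ meets $S_i$; it says nothing about $h_n$ mapping checkpoint points to checkpoint points. If $s \in S_i$ lies on $h_n[x,g^mx]$ then $h_n^{-1}s$ lies on $[x,g^mx]$, but there is no reason for it to lie in $\Lambda$: the translate $h_n^{-1}\Lambda$ is an arbitrary quasi-line, and neither Morse closeness nor arc-length matching can force $h_n^{-1}S_i$ to meet any $S_j$. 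The checkpoints are finite sets of points, not coarse neighbourhoods, so ``close'' cannot be promoted to ``equal''. The correspondence $\sigma_n$ and the exact point-matching you need therefore cannot be extracted from the hypotheses.

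The paper's Lemma~\ref{lem:key_lemma} avoids this by changing what is recorded. Fix one geodesic $Q_x$ from $x$ to $g^mx$. For each $g_i$ in the coarse stabiliser, \"uber-contraction applied to the translate $g_iQ_x$ produces points $s \in S_j$, $s' \in S_{j'}$ on $g_iQ_x$ with $s'=g^{j'-j}s$ and $j'-j\geq m_0$ (the latter by pigeonhole on $|S|$ among the many checkpoints hit). One then records the quadruple $(s,s',g_i^{-1}s,g_i^{-1}s')$, where the last two coordinates are simply points of the fixed path $Q_x$; no attempt is made to locate them in $\Lambda$. Two elements with the same quadruple differ by an isometry fixing both $s$ and $s'$, so each fibre is finite by the weak hypothesis, and finiteness of the image finishes. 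The essential difference from your approach is that the bookkeeping happens on the single fixed geodesic $Q_x$ rather than on $\Lambda$, which is exactly what removes the need for any rigidification.
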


Before starting the proof, let us recall an elementary property of coarse projections on strongly contracting quasi-lines: 

\begin{lem}[coarsely Lipschitz projection]\label{lem:coarsely_Lipschitz_projection}
 Let $\Lambda$ be a strongly contracting quasi-line, and $C$ a constant such that balls disjoint from $\gamma$ project on $\Lambda$ to subsets of diameter at most $C$. Let $x, y$ two points of $X$ and let $\pi(x), \pi(y)$ be two closest-point projections on $\Lambda$. Then $$d(\pi(x), \pi(y)) \leq \mbox{max}(C, 4d(x,y)).$$ 
\end{lem}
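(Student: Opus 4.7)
The plan is to argue by a dichotomy depending on how $d(x,y)$ compares with the distances of $x$ and $y$ to $\Lambda$. Up to swapping the roles of $x$ and $y$, I will assume $d(x,\Lambda) \geq d(y,\Lambda)$ and set $r := d(x,\Lambda)$.

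In the first regime, when $d(x,y) < r$, I plan to produce a single ball disjoint from $\Lambda$ that contains both $x$ and $y$, and to apply the strongly contracting property directly. Pick any $r'$ with $d(x,y) < r' < r$; then the open ball $B(x,r')$ contains both $x$ and $y$, and it is disjoint from $\Lambda$ since $r' < d(x,\Lambda)$. By definition of strongly contracting, the closest-point projection of $B(x,r')$ on $\Lambda$ has diameter at most $C$, and as $\pi(x)$ and $\pi(y)$ both belong to this projection, we obtain $d(\pi(x),\pi(y)) \leq C$.

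In the complementary regime, when $d(x,y) \geq r$, I would simply invoke the triangle inequality:
\[
d(\pi(x),\pi(y)) \leq d(\pi(x),x) + d(x,y) + d(y,\pi(y)) = d(x,\Lambda) + d(x,y) + d(y,\Lambda).
\]
Since $d(y,\Lambda) \leq d(x,\Lambda) = r \leq d(x,y)$, each of the two outer terms is bounded above by $d(x,y)$, so $d(\pi(x),\pi(y)) \leq 3 d(x,y) \leq 4 d(x,y)$. Combining the two regimes yields the claimed bound $\max(C, 4 d(x,y))$.

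There is essentially no obstacle in this argument: its entire content is captured by the definition of a strongly contracting quasi-line on one side and by the triangle inequality on the other. The only minor technical point is the passage from $r = d(x,\Lambda)$ to the slightly smaller radius $r'$, needed to guarantee that the open ball used in the first case is strictly disjoint from $\Lambda$; this harmless looseness is also precisely what the factor $4$ in the statement absorbs, compared to the sharper $3$ produced by the triangle inequality alone.
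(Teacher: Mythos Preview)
Your argument is correct and essentially identical to the paper's: both split into the same two cases according to whether a ball about $x$ of radius roughly $d(x,y)$ meets $\Lambda$, applying the strong contraction hypothesis in the disjoint case and the triangle inequality otherwise. Your additional symmetry assumption $d(x,\Lambda)\geq d(y,\Lambda)$ even yields the sharper bound $3d(x,y)$ in the second case, which the paper does not bother to record.
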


\begin{proof}
 Let $x, y$ be two points of $X$ and consider the ball of radius $d(x,y)$ around $x$. Then either this ball is disjoint from $\Lambda$, in which we case the strongly contracting assumption immediately implies that $d(\pi(x), \pi(y)) \leq C$, or it contains a point of $\Lambda$, in which case the distance from $x$ (respectively $y$) to any of its projection on $\Lambda$ is at most $d(x,y)$ (respectively $2d(x,y)$), and thus $d(\pi(x), \pi(y)) \leq 4d(x,y).$
\end{proof}

Before proving Proposition \ref{prop:General_Criterion_acylindrically_hyperbolic}, we present a key lemma, which reduces the proof of the WPD condition to the WPD condition for points of the checkpoints.

\begin{lem}\label{lem:key_lemma}
 Let $g$ be an \"uber-contracting element with system of checkpoints $(S_i)_{i\in \bbZ}$ and error constant $L \geq 0$. Assume that there exists a constant $m_0$ such that for every point $s \in S$ and every $m \geq m_0$, only finitely many elements of $G$ fix pointwise $s$ and $g^ms$. Then $g$ satisfies the WPD condition. 
\end{lem}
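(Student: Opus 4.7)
The plan is to verify the WPD condition directly. Given $r \geq 0$ and a basepoint $x \in X$, I want to produce $m \geq m_0$ such that only finitely many $h \in G$ satisfy $d(x, hx) \leq r$ and $d(g^m x, hg^m x) \leq r$. Since $g$ is Morse by Remark \ref{rmk:uber_strong}, a standard triangle inequality argument lets me replace $x$ by any point at bounded distance, at the cost of enlarging $r$ by an additive constant. Thus I reduce to the case $x = s$ for some fixed $s \in S$.

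Next I apply the \"uber-contraction to two pairs. Fix a geodesic $\gamma_0$ from $s$ to $g^m s$; by hypothesis, $\gamma_0$ meets each checkpoint $S_i$ at some point $p_i$, for $i$ in the middle range $[L, m-L]$. For any $h$ satisfying the bounded movement condition, Lemma \ref{lem:coarsely_Lipschitz_projection} shows that the closest-point projections $\pi(hs), \pi(hg^m s)$ on $\Lambda$ lie within a distance $R_\pi := \max(C, 4r)$ of $s, g^m s$ respectively. Thus $S_i$ still coarsely separates these projections for $i$ in a slightly narrower middle range, and the \"uber-contraction forces the image geodesic $h\gamma_0$ to meet $S_i$. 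Writing $h\gamma_0 \cap S_i \ni hq_i$ with $q_i \in \gamma_0$, the estimate $d(p_i, hq_i) \leq \mathrm{diam}(S)$ together with additivity of distances along the geodesic $\gamma_0$ and $d(s,hs) \leq r$ yield $|d(s, q_i) - d(s, p_i)| \leq r + \mathrm{diam}(S)$, hence $d(p_i, q_i) \leq r + \mathrm{diam}(S)$. Consequently $d(p_i, h p_i) \leq r + 2\,\mathrm{diam}(S) =: R_0$, a uniform bound independent of $m$ and of the choice of $i$ in the middle range.

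From this approximate fixing I aim to conclude the finiteness of the set $F$ of such $h$. Each $S_i$ being finite, the closest point of $S_i$ to $hp_i$ is a combinatorial datum taking only finitely many values per index $i$. Selecting $\gamma_0$ (via the Morse property, which places $g^i s$ uniformly close to $\gamma_0$) so that $p_i$ may be identified with $g^i s$ up to a uniform error, two elements $h_1, h_2 \in F$ with the same combinatorial signature satisfy $d(g^i s, h_1^{-1} h_2\, g^i s) \leq 2 R_0 + O(1)$ for all such $i$. Applying the \"uber-contraction a second time to sub-intervals $[g^{i_1} s, g^{i_2} s]$ of the axis with $i_2 - i_1 \geq m_0$, one argues that this uniform bounded movement on the endpoints forces $h_1^{-1} h_2$ to pointwise fix a pair of the form $g^{i_1}(s, g^{i_2 - i_1} s)$, which is a $g$-conjugate of a pair whose pointwise stabilizer is finite by hypothesis. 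Since the number of combinatorial signatures is finite and each class is contained in a single coset of this finite stabilizer, we conclude $|F| < \infty$.

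The main obstacle is the last step: promoting the approximate fixing of checkpoints (with uniform error $R_0$) to the exact pointwise fixing of a pair in the $g$-orbit of $s$. The key leverage is that checkpoints are genuinely finite sets -- a strengthening over mere strong contraction -- so the \"uber-contraction imparts a discrete combinatorial structure along $\Lambda$. The iteration must be calibrated so that $m$ is chosen much larger than $R_0, L$ and the Morse constants, absorbing the bounded error into the conjugation by a power of $g$ and leaving no residual movement on the identified pair.
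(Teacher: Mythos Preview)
Your argument is sound through the estimate $d(p_i, h p_i) \leq R_0 := r + 2\,\mathrm{diam}(S)$ for every $h$ in the coarse stabiliser and every index $i$ in the middle range. The gap is in the final step, where you try to promote this approximate fixing to exact pointwise fixing. Recording the closest point of $S_i$ to $hp_i$ as a ``combinatorial signature'' and comparing two elements $h_1, h_2$ with the same signature still only yields $d(g^i s,\, h_1^{-1}h_2\, g^i s) \leq 2R_0 + O(1)$; the composite $h_1^{-1}h_2$ again merely \emph{approximately} fixes the $g^i s$. Your proposed ``second application of the \"uber-contraction'' cannot close this: the \"uber-contracting property asserts that geodesics between points with far-apart projections meet the checkpoints, not that an isometry moving two endpoints a bounded amount must fix some point exactly. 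No choice of $m$ absorbs the error, since $R_0$ is independent of $m$ and conjugating by a power of $g$ only translates the pair without reducing displacement. But the hypothesis of the lemma controls only \emph{pointwise} stabilisers of pairs $(s, g^m s)$, so it cannot be invoked on an element moving these points by $R_0 > 0$. This is precisely the obstacle that makes full WPD harder than the weak version, and your argument is circular at this point.

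The paper avoids approximate fixing altogether. For each $g_i$ in the coarse stabiliser, the geodesic $g_iQ_x$ meets every checkpoint $S_j$ in the middle range, so one picks an actual intersection point in each $S_j$. Since there are at least $m_0(|S|+1)$ such checkpoints and only $|S|$ underlying base points, pigeonhole produces two hit-points $s \in S_j$, $s' \in S_{j'}$ with $s' = g^{j'-j}s$ and $j'-j \geq m_0$, both lying on $g_iQ_x$. One then records the $4$-tuple $(s, s', g_i^{-1}s, g_i^{-1}s')$, the last two coordinates being points of the fixed geodesic $Q_x$. Two elements with the same tuple differ by an isometry fixing $s$ and $s'$ \emph{exactly}, so the weak hypothesis applies directly and each fibre is finite. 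The idea you are missing is this exact-incidence bookkeeping: rather than bounding how far $h$ moves the $p_i$, record \emph{which} checkpoint point $h\gamma_0$ actually hits and \emph{where on $\gamma_0$} it came from, then use pigeonhole to guarantee two such records at $g$-power at least $m_0$ apart.
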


\begin{proof}

Fix $r>0$ and $x \in X$. We want to show that there exists an integer $m \geq 1$ such that the \textit{coarse stabiliser} $\mbox{Stab}_r(x, g^mx)$, that is, the set of group elements $g_i$ such that $d(x,g_ix), d(g^mx,g_ig^mx)\leq r$, is finite. 

Let $C$ be a constant such that balls disjoint from $\Lambda:= \bigcup_i S_i$ project on $\Lambda$ to subsets of diameter at most $C$.

Let 
$m:= 8r + 2C + m_0(|S|+1) + 2L$, and consider  group elements $g_i$  in $\mbox{Stab}_r(x, g^mx)$. Let $Q_x$ be a geodesic path between $x$ and $g^mx$. The constant $m$ has been chosen so that closest-point projections on $\Lambda$ of $g_ix$ and $g_ig^mx$ are at distance at least 
$m_0|g|(|S|+1)+2L$ 
by Lemma \ref{lem:coarsely_Lipschitz_projection}. 
Let $S_1, \ldots, S_k$ be the checkpoints of $\Lambda$ coarsely separating $\pi_\Lambda(x)$ and $\pi_\Lambda(g^mx)$ and which are at distance at least $L$ from $\pi_\Lambda(x)$ and $\pi_\Lambda(g^mx)$. Note that $k \geq m_0(|S|+1)$ by construction.
In particular, for each $g_i \in \mbox{Stab}_r(x, g^mx)$ there exist two distinct checkpoints of $\Lambda$, say $S$, $S' \in \{ S_1, \ldots, S_k\}$, and two points $s \in S$ and $s'\in S'$ such that $s'=g^ms$ with $m \geq m_0$, 
such that $g_iQ_x$ contains $s$ and $s'$.

This allows us to define a  map $\phi$ (using the same notation as in the previous paragraph):
 \begin{align}
\mbox{Stab}_r(x, g^mx) & \rightarrow \cup_{1 \leq j \leq k}S_j \times \cup_{1 \leq j \leq k}S_j \times Q_x \times Q_x\\
       g_i &\mapsto (s,s', g_i^{-1}s, g_i^{-1}s').
\end{align}

Notice that the target is finite. Let $F$ be the preimage of an element in the image of $\phi$. Choose an element $f_0 \in F$ and consider the set $Ff_0^{-1}$ of elements of $G$ of the form $ff_0^{-1}, f \in F$. Then elements of $Ff_0^{-1}$ fix both $s$ and $s'$ by construction. As $s'=g^ms$ with $m \geq m_0$, it follows that $Ff_0^{-1}$, and hence $F$, is finite by the weak WPD condition. It now follows that 
$\mbox{Stab}_r(x, g^mx)$ is finite.
\end{proof}

\begin{proof}[Proof of Proposition \ref{prop:General_Criterion_acylindrically_hyperbolic}]
 Let $g$ be an element of $G$ as in the statement of the Theorem and let $(S_i)_{i\in \bbZ}$ and $L \geq 0$ be a system of checkpoints as in Definition \ref{def:ubercontraction}. By \cite[Theorem H]{BestvinaBrombergFujiwara}, it is enough to show that $g$ is strongly contracting and satisfies the WPD condition. The first part follows directly from Remark \ref{rmk:uber_strong}. Since there are only finitely many elements in $\cup S_i$ modulo the action of $\langle g \rangle$, we choose $m_0 \geq 1$ such that for every point $s \in \cup S_i$ such that for every $m > m_0$, there exists only finitely many elements fixing both $s$ and $g^ms$. The second part follows directly from Lemma \ref{lem:key_lemma}
\end{proof}

\section{A local criterion}

In this section, we give a method for constructing \"uber-contractions for groups acting on polyhedral complexes with some vertices having unbounded links. This allows us to give a second, more local, criterion for proving the acylindrical hyperbolicity of such groups.

\subsection{The Strong Concantenation Property}

\begin{definition}[angle]
	Let $X$ be a polyhedral complex. The \textit{angle} at a vertex $v$ between two edges of $X$ containing $v$ is their (possibly infinite) distance in the $1$-skeleton of the link of $v$ (where each edge of that graph is given length $1$). 
	
	For two geodesic paths $\gamma, \gamma'$ starting at a vertex $v$, the \textit{angle at $v$}  between $\gamma$ and $\gamma'$, denoted $\angle_v(\gamma, \gamma')$, is the angle at $v$ between the associated edges of $\gamma$ and $\gamma'$. 
	
	Finally, for vertices $v, w, w'$ of $X$, we define the \textit{angle} $\angle_v(w,w')$ as the minimum of the angle $\angle_{v}(\gamma, \gamma')$ where $\gamma$ (respectively $\gamma'$) ranges over the combinatorial geodesics between $v$ and $w$ (respectively $w'$).
\end{definition}

The following definition mimicks the strong properties of geodesics in CAT(0) spaces.

\begin{definition}\label{def:Strong_Concatenation_Property}
We say that a complex satisfies the	Strong Concatenation Property with constants $(A, R)$ if the following two conditions hold: 
\begin{itemize}

\item Let $\gamma_1, \gamma_2$ be to geodesics of $X$ meeting at a vertex $v$. If $\angle_v(\gamma_1, \gamma_2) > A$, then $\gamma_1\cup \gamma_2$ is a geodesic of $X$.
\item Let $\gamma$ be a geodesic segment of $X$, $v$ a vertex of $\gamma$. Let $x, y$ be two vertices of $X$, $\pi(x), \pi(y)$ be projections of $x, y$ respectively on $\gamma$ such that $\pi(x)$ and $\pi(y)$ are at distance strictly more than $R$ from $v$. If $\gamma$ makes an angle greater than $A$ at $v$, then every geodesic between $x$ and $y$  contains $v$. 
\end{itemize}
\end{definition}

Note that this property has the following immediate consequence, which allows for the construction of many \"uber-contractions:

\begin{lem}[Local criterion for \"uber-contractions]\label{lem:Local_Criterion} 
	Let $X$ be a simply connected complex with 
	the Strong Concatenation Property with constants $(A,R)$. Then there exists a constant $C$ such that the following holds:
	
	Let $h$ be a hyperbolic isometry of $X$ with axis $\gamma$ and assume that for some vertex $v$ of $\gamma$, the angle made by $\gamma$ at $v$ is at least $C$. Then $h$ is \"uber-contracting with respect to the system of checkpoints $(h^iv)_{i \in \bbZ}$. \qed
\end{lem}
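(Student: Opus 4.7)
The plan is to show that the family $(h^i v)_{i \in \bbZ}$ forms a system of checkpoints in the sense of Definition \ref{def:ubercontraction}, by taking $C := A+1$ (so the angle of $\gamma$ at every $h^i v$ strictly exceeds $A$), $S := \{v\}$, $\Lambda := \{h^i v\}_{i \in \bbZ}$, and the quasi-isometry $f \colon \Lambda \to \bbR$ defined by $f(h^i v) := iD$, where $D := d(v,hv)$ is the translation length of $h$ along $\gamma$. Since $\Lambda$ is contained in the geodesic axis $\gamma$, the map $f$ is an isometric embedding onto $D\bbZ$, hence a quasi-isometry onto its image. The angle assumption, together with the $h$-equivariance of $\gamma$, ensures that the second bullet of the Strong Concatenation Property can be invoked at every checkpoint $h^i v$.

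The crux of the argument is the following control of projections: for every vertex $x \in X$, any closest-point projection $\pi_\gamma(x)$ of $x$ on $\gamma$ and any closest-point projection $\pi_\Lambda(x)$ of $x$ on $\Lambda$ satisfy $d(\pi_\gamma(x), \pi_\Lambda(x)) \leq 2R + D$ along $\gamma$. I would prove this by contradiction. If this distance strictly exceeded $2R+D$, then, since consecutive elements of $\Lambda$ lie at distance $D$ apart on $\gamma$, the middle portion of the $\gamma$-segment between $\pi_\gamma(x)$ and $\pi_\Lambda(x)$ would contain a vertex $h^i v \in \Lambda$ at $\gamma$-distance strictly greater than $R$ from both endpoints. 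Since the angle of $\gamma$ at $h^i v$ exceeds $A$, the second bullet of the Strong Concatenation Property, applied to the vertices $x$ and $\pi_\Lambda(x)$, would force every geodesic between them to contain $h^i v$, so that
\[ d(x,\pi_\Lambda(x)) = d(x,h^i v) + d(h^i v, \pi_\Lambda(x)) > d(x, \pi_\Lambda(x)), \]
using $d(x,h^i v) \geq d(x,\pi_\Lambda(x))$ (as $h^i v \in \Lambda$) and $d(h^i v, \pi_\Lambda(x)) > R > 0$. This contradicts the minimality of $\pi_\Lambda(x)$.

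With this control of projections in hand, I set $L := 3R + D + 1$ and verify the checkpoint property. Given $x,y \in X$, closest-point projections $x' := \pi_\Lambda(x)$ and $y' := \pi_\Lambda(y)$, and a checkpoint $h^i v$ coarsely separating $x'$ from $y'$ at distance at least $L$ from each, the sub-claim combined with the triangle inequality yields $d(\pi_\gamma(x), h^i v) \geq L - (2R+D) = R+1 > R$, and analogously for $y$. Applying the second bullet of the Strong Concatenation Property to $\gamma$ at $h^i v$ then forces every geodesic from $x$ to $y$ to contain $h^i v$, completing the verification that $h$ is an \"uber-contraction with respect to the checkpoints $(h^i v)_{i \in \bbZ}$.

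The principal obstacle is the sub-claim relating the two projections: in a general polyhedral complex there is no reason a priori for a closest-point projection on a discrete subset of a geodesic to be close to the closest-point projection on the ambient geodesic. The Strong Concatenation Property furnishes exactly the amount of tree-like behaviour needed to rule out such a ``shortcut''. A secondary technical point, which I would handle by approximation by nearby vertices and by absorbing the bounded error into the constant $L$, is that the second bullet of the Strong Concatenation Property is formulated for vertex endpoints, whereas Definition \ref{def:ubercontraction} allows arbitrary points of $X$.
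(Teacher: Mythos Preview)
Your argument is correct and is essentially the intended proof: the paper states the lemma as an ``immediate consequence'' of the Strong Concatenation Property and gives no details beyond the \qed. Your sub-claim comparing $\pi_\gamma$ and $\pi_\Lambda$ is exactly the translation needed between Definition~\ref{def:ubercontraction} (projections onto the discrete set $\Lambda$) and the second clause of Definition~\ref{def:Strong_Concatenation_Property} (projections onto the geodesic $\gamma$); this is a detail the paper suppresses entirely, and you handle it cleanly. One small point worth making explicit: the second bullet of the Strong Concatenation Property implicitly requires $\pi(x)$ and $\pi(y)$ to lie on opposite sides of $v$ in $\gamma$ (otherwise the conclusion is plainly false), and your choice of $L$ guarantees that $\pi_\gamma(x)$ and $\pi_\gamma(y)$ remain on the same sides of $h^iv$ as $x'$ and $y'$ respectively, so this hypothesis is indeed met.
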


This allows to give a local criterion to show the acylindrical hyperbolicity of a group:

\begin{prop}[Link criterion for acylindrical hyperbolicity]\label{prop:Criterion_acylindrically_hyperbolic}
	Let $X$ be a simply connected  hyperbolic complex with the Strong Concatenation Property, together with an action by isometries of a group $G$. Assume that there exists a vertex $v$ of $X$ and a group element $g$ such that: 
	\begin{itemize}
		\item[$1)$] the action of $G_v$ on the link of $v$ has unbounded orbits (for the simplicial metric on $\mbox{lk}(v)$ where every edge has length $1$),
		\item[$2)$] the intersection $G_v \cap gG_v g^{-1}$ is finite.
	\end{itemize}
	Then $G$ is either virtually cyclic or acylindrically hyperbolic. 
\end{prop}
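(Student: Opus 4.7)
The plan is to invoke Theorem A (Proposition~\ref{prop:General_Criterion_acylindrically_hyperbolic}) by producing a single element $h \in G$ that is both an \"uber-contraction (via Lemma~\ref{lem:Local_Criterion}) and that satisfies the weak WPD condition. The weak WPD verification will ultimately reduce to hypothesis~2 via the key identity $hG_vh^{-1} = g_0(gG_vg^{-1})g_0^{-1}$, arising from a careful choice of $g_0 \in G_v$.

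First I would build the axis of $h$. Fix combinatorial geodesics $\gamma$ from $v$ to $gv$ and $\gamma_-$ from $g^{-1}v$ to $v$; let $e$ (resp.\ $f$) be the first edge of $\gamma$ at $v$ (resp.\ the last edge of $\gamma_-$ at $v$). Hypothesis~1 supplies, for every $M$, an element $g_0 \in G_v$ with $d_{\mbox{lk}(v)}(e, g_0 e) \geq M$; the triangle inequality in the $1$-skeleton of $\mbox{lk}(v)$ then shows that for $M$ large enough, $d_{\mbox{lk}(v)}(f, g_0 e) \geq C$, where $C$ is the constant from Lemma~\ref{lem:Local_Criterion}. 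Set $h := g_0 g$. The $h$-orbit of the concatenated path $\gamma_- \cup (g_0 \gamma)$ is then a broken path whose link-angle at each vertex $h^i v$ equals $\angle_v(f, g_0 e) \geq C$ by $h$-equivariance. Applying the first condition of the Strong Concatenation Property inductively shows that this broken path is a bi-infinite geodesic line $\Lambda$, the axis of the hyperbolic isometry $h$. Lemma~\ref{lem:Local_Criterion} then asserts that $h$ is an \"uber-contraction with checkpoints $(h^i v)_{i \in \bbZ}$, relative to $S = \{v\}$.

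Next I would verify the weak WPD condition. For $m$ sufficiently large, the second condition of the Strong Concatenation Property, applied at the axis-vertex $hv$ (whose angle along $\Lambda$ is at least $C \geq A$), forces every geodesic in $X$ from $v$ to $h^m v$ to contain $hv$. Given $\phi \in G_v \cap h^m G_v h^{-m}$, the image under $\phi$ of the axis-segment $[v, h^m v] \subset \Lambda$ is another geodesic from $v$ to $h^m v$, hence also contains $hv$; since $\phi(hv)$ lies on this image at distance $d(v, hv)$ from $v = \phi(v)$, the uniqueness of the arc-length parametrization of a geodesic gives $\phi(hv) = hv$. Hence $\phi \in G_v \cap G_{hv} = G_v \cap hG_vh^{-1}$, and since $g_0 \in G_v$,
\[
G_v \cap hG_vh^{-1} = G_v \cap g_0(gG_vg^{-1})g_0^{-1} = g_0\bigl(G_v \cap gG_vg^{-1}\bigr)g_0^{-1},
\]
which is finite by hypothesis~2. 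Theorem~A then concludes.

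The main technical obstacle is in the hypothesis of the second Strong Concatenation condition, which requires that the distance $d(v, hv) = d(v, gv)$ exceed the constant $R$ --- something not a priori guaranteed by the statement. The remedy is to first replace $h$ by a sufficiently high power $h^N$ whose translation length exceeds both $L$ and $R$, use \"uber-contraction to force every geodesic $[v, h^{Nm}v]$ to contain the intermediate checkpoint $h^N v$, and then apply the second Strong Concatenation condition a second time to the axis-subsegment $[v, h^N v]$ in order to bring the fixation down to the single vertex $hv$; the identity above then applies unchanged.
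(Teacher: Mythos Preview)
Your approach is essentially the paper's: construct the \"uber-contracting element as a product of $g$ with an element of $G_v$ chosen to create a large link-angle, invoke Lemma~\ref{lem:Local_Criterion}, and reduce the weak WPD to hypothesis~2) via the identity $G_v \cap hG_vh^{-1} = g_0(G_v \cap gG_vg^{-1})g_0^{-1}$. (The paper multiplies on the right, taking $gh$ with $h\in G_v$, but this is immaterial.) Two points need correcting.

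First, a minor one: for the angle at each $h^i v$ to equal $\angle_v(f, g_0 e)$ by $h$-equivariance, you must take $\gamma_- := h^{-1}(g_0\gamma) = g^{-1}\gamma$, not an arbitrary geodesic from $g^{-1}v$ to $v$; otherwise the translates $h^i(g_0\gamma)$ and $h^{i+1}\gamma_-$ need not coincide on their common segment and your ``broken path'' is not well-defined.

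Second, and more substantively, your remedy for the obstacle $d(v,hv)\le R$ is circular. After establishing that $\phi$ fixes $v$ and $h^N v$, your ``second application'' of the Strong Concatenation condition at the vertex $hv$ still requires the projection of $v$ (namely $v$ itself) to lie at distance $>R$ from $hv$ --- precisely the hypothesis you are trying to circumvent. The fix is simpler and needs no passage to powers: with the original system of checkpoints $(h^iv)_{i\in\bbZ}$, \"uber-contraction forces any $\phi$ fixing $v$ and $h^m v$ (for $m$ large) to fix \emph{every} intermediate checkpoint $h^iv$ at distance $>L$ from both endpoints; in particular it fixes two \emph{consecutive} such checkpoints $h^kv$ and $h^{k+1}v$, whence
\[
\phi \in G_{h^kv}\cap G_{h^{k+1}v} \;=\; h^k\bigl(G_v\cap hG_vh^{-1}\bigr)h^{-k} \;=\; h^kg_0\bigl(G_v\cap gG_vg^{-1}\bigr)g_0^{-1}h^{-k},
\]
a fixed conjugate of the finite group in hypothesis~2). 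The paper's proof is terser than yours on this point --- it simply writes ``by condition~2) and Lemma~\ref{lem:key_lemma}'' --- so you have in fact identified an issue the paper leaves implicit.
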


\begin{rmk}
	In the first item, one could  require that the action of $G_v$ on the link of $v$ has \textit{large} orbits, where large is defined in terms of constants appearing in the Strong Concatenation Property.  
\end{rmk}

\begin{proof}[Proof of Proposition \ref{prop:Criterion_acylindrically_hyperbolic}]
	Choose a vertex $v$ and a group element $g$ satisfying $1)$ and $2)$. Choose an integer $N$ greater than $8\delta$, where $\delta$ is the hyperbolicity constant of $X$. Let $P$ be a geodesic between $v $ and $gv$. By condition $1)$, choose an element $h \in G_v$ such that $P$ and $ghP$ make an angle at least $A_{}$. Then the element $gh$ is an \"uber-contraction, with $((gh)^nv)_{n \in \bbZ}$ as a system of gates. Indeed, for every $i$, $(gh)^iP$ and $(gh)^{i+1}P$ make an angle of at least $A_{}$. In particular, $\bigcup_{i \in \bbZ} (gh)^iP$ is a geodesic by the Strong Concatenation Property and thus $gh$ is a hyperbolic isometry with axis $\gamma$. Such an axis turns out to be \"uber-contracting  by Lemma \ref{lem:Local_Criterion}. 
	
	Moreover, $gh$ satisfies the so-called WPD condition by condition $2)$ and Lemma \ref{lem:key_lemma}. Hence the result follows from Proposition \ref{prop:General_Criterion_acylindrically_hyperbolic}.
\end{proof}

\subsection{Complexes with the Strong Concatenation Property}

We now give two simple properties that imply the Strong Concatenation Property. The first one is reminiscent of features of CAT(0) spaces.

\begin{definition}[bounded angle of view]\label{def:Angle_View}
 We say that a simply connected complex $X$ has an \textit{angle of view} of at most $A\geq 0$  if there exists a constant $A$ such that for every vertices $x, y$ of $X$ and every vertex $z$ of $X$ which does not lie on a geodesic between $x$ and $y$, we have $\angle_z(x,y) \leq A$. We say that $X$ has a \textit{bounded angle of view} if there exists a constant $A \geq 0$ such that $X$ has an angle of view of at most $A$.
\end{definition}

\begin{example}\label{ex:CAT0_angle}
 CAT(0) spaces have an angle of view of at most $\pi$.
\end{example}

This weak condition seems to be satisfied by many natural examples of complexes that are non-positively curved in a broad sense. For instance,  it follows from the classification of geodesic triangles by Strebel \cite{StrebelDiscDiagrams} that simply connected $C'(1/6)$ complexes in the sense of McCammond--Wise \cite{McCammondWiseFansLadders} have a bounded angle of view. The result also holds for systolic complexes.

\begin{lem}
	Simply connected complexes with an angle of view at most $A$ have the Strong Concatenation Property with constants $(3A, 0 )$.
\end{lem}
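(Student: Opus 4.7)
The plan is to establish the two clauses of Definition \ref{def:Strong_Concatenation_Property} with constants $(A',R) = (3A,0)$. The backbone in both cases is the contrapositive of the angle-of-view hypothesis---if $\angle_z(x,y) > A$ then $z$ lies on some combinatorial geodesic between $x$ and $y$---together with the triangle inequality for the simplicial metric in the $1$-skeleton of the link at $v$. Everything rests on the following auxiliary claim, which I would prove first: for any two combinatorial geodesics $\alpha,\beta$ from a vertex $v$ to a common vertex $z$, the angle between their first edges satisfies $\angle_v(\alpha,\beta) \leq A$.

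Granting this claim, clause (1) would go as follows. Given $\gamma_1,\gamma_2$ meeting at $v$ with far endpoints $x,y$ and $\angle_v(\gamma_1,\gamma_2) > 3A$, I argue by contradiction that $v$ must lie on some geodesic from $x$ to $y$---which is precisely the condition that $\gamma_1\cup\gamma_2$ be a geodesic. Otherwise, bounded angle of view at $v$ produces geodesics $\sigma_1,\sigma_2$ from $v$ to $x,y$ with $\angle_v(\sigma_1,\sigma_2) \leq A$, and the triangle inequality
\[
\angle_v(\gamma_1,\gamma_2) \leq \angle_v(\gamma_1,\sigma_1) + \angle_v(\sigma_1,\sigma_2) + \angle_v(\sigma_2,\gamma_2) \leq A + A + A = 3A,
\]
where the outer bounds come from the auxiliary claim applied at the common endpoints $x$ and $y$, contradicts the hypothesis. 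Clause (2) follows the same pattern: assuming that some geodesic from $x$ to $y$ avoids $v$, bounded angle of view together with the auxiliary claim---used to compare geodesics from $v$ to $x,y$ with the two halves $\gamma_L,\gamma_R$ of $\gamma$ at $v$, extended past $\pi(x),\pi(y)$---forces $\angle_v(\gamma_L,\gamma_R) \leq 3A$, contradicting the hypothesis on the angle of $\gamma$ at $v$.

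It remains to establish the auxiliary claim. Writing the first edges of $\alpha,\beta$ as $[v,p]$ and $[v,q]$, note that $\angle_v(p,q)$ coincides with $\angle_v([v,p],[v,q])$ because $p,q$ are neighbours of $v$ and so the only geodesics from $v$ to them are the edges themselves. Applying bounded angle of view at $v$ to the pair $(p,q)$ yields the desired bound provided $v$ does not lie on any geodesic from $p$ to $q$, which is automatic whenever $d(p,q) \leq 1$. The principal obstacle is the degenerate case $d(p,q) = 2$, in which $p-v-q$ is itself a geodesic and the angle-of-view hypothesis is silent; I would resolve this by iterating the argument with the next vertices of $\alpha$ and $\beta$---both of which remain at distance $d(v,z)-2$ from the common endpoint $z$---until the hypothesis applies at $v$ (or at a nearby vertex) directly. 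A further subtlety for clause (2) is that the naive extension of $\gamma_L$ past $\pi(x)$ by a geodesic from $\pi(x)$ to $x$ need not itself be a geodesic in a general polyhedral complex, so that reduction step also requires care.
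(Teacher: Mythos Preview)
Your overall architecture---triangle inequality in the link of $v$ plus three applications of the angle-of-view bound---is exactly the paper's, but you have routed both clauses through an auxiliary claim (two geodesics from $v$ to a \emph{common} endpoint make angle at most $A$) that you do not actually prove, and that the paper never needs. Your own discussion of the case $d(p,q)=2$ shows why the claim resists the argument you sketch: when $p$--$v$--$q$ is itself a geodesic, the hypothesis is silent at $v$, and ``iterating to the next vertices'' can land you in the same configuration indefinitely. This is a genuine gap.

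The paper sidesteps it by a different choice of the three comparison pairs. For clause~(2), rather than extending $\gamma_L$ past $\pi(x)$ to reach $x$, the paper keeps the two halves $\gamma_{v,x'}$ and $\gamma_{v,y'}$ of $\gamma$ (with $x'=\pi(x)$, $y'=\pi(y)$), picks geodesics $\gamma_{v,x}$, $\gamma_{v,y}$ from $v$ to $x$ and $y$, and writes
\[
\angle_v(\gamma) \;\leq\; \angle_v(\gamma_{v,x'},\gamma_{v,x}) + \angle_v(\gamma_{v,x},\gamma_{v,y}) + \angle_v(\gamma_{v,y},\gamma_{v,y'}) \;\leq\; 3A.
\]
The three bounds come from the angle-of-view hypothesis applied to the pairs $(x',x)$, $(x,y)$, $(y,y')$---each a pair of \emph{distinct} vertices. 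For the middle pair, the contradiction hypothesis supplies a geodesic $\gamma_{x,y}$ avoiding $v$. For the outer pairs, the projection property does the work: if $v$ lay on a geodesic from $x$ to $x'$, then $d(x,x') = d(x,v) + d(v,x') > d(x,\gamma)$ since $v\neq x'$, contradicting $x'\in\pi_\gamma(x)$. So $v$ lies on no geodesic from $x$ to $x'$, and likewise for $(y,y')$. This simultaneously dissolves your extension worry (nothing is concatenated) and your auxiliary claim (no two geodesics share an endpoint other than $v$).

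For clause~(1) the paper is terser still: if $\gamma_1\cup\gamma_2$ fails to be a geodesic then $d(x,v)+d(v,y)>d(x,y)$, so $v$ lies on no geodesic from $x$ to $y$, and the hypothesis bounds the angle directly. The constant $3A$ is only needed to cover clause~(2).
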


\begin{proof}
	From the bounded angle of view condition, it is immediate that the concatenation of two geodesics making an angle greater than $A$ is again a geodesic. Let us now consider the case of a geodesic segment $\gamma$ of $X$, and let $v$ be a vertex of $\gamma$. Let $x, y$ be two vertices of $X$, $\pi(x), \pi(y)$ be projections of $x, y$ respectively on $\gamma$ such that $\pi(x)$ and $\pi(y)$ are distinct from  $v$. If there exists a geodesic $\gamma_{x,y}$ between $x$ and $y$ not containing $v$, we choose geodesics $\gamma_{v,x}$, $\gamma_{v,y}$ between $v$ and $x$ (respectively between $v$ and $y$), geodesics $\gamma_x$, $\gamma_y$ between $x$ and a point $x'$ of $\pi_\gamma(x)$ (respectively between $y$ and a point$y'$  of $\pi_\gamma(y)$), and  let $\gamma_{v,x'}$, $\gamma_{v,y'}$ be the sub-segments of $\gamma$ between $v$ and $x'$ (respectively between $v$ and $y'$). By assumption, $v$ does not belong to $\gamma_x \cup \gamma_{x,y} \cup \gamma_y$, so we get 
	$$ \angle_v(\gamma) \leq \angle_v(\gamma_{v,x'}, \gamma_{v,x}) + \angle_v(\gamma_{v,x}, \gamma_{v,y}) + \angle_v(\gamma_{v,y}, \gamma_{v,y'} ) \leq 3A,$$
	which concludes the proof.
\end{proof}

\begin{rmk}
	If $X$ has a bounded angle of view, it is not necessary to require the space to be hyperbolic in the statement of Proposition \ref{prop:Criterion_acylindrically_hyperbolic}, as one can check directly that $ \bigcup_{i \in \bbZ} (gh)^iP$ is a geodesic in the proof of Proposition \ref{prop:Criterion_acylindrically_hyperbolic}.
\end{rmk}

The second property, more algorithmic in nature, is particularly suitable for hyperbolic complexes which are not known to have such a rich combinatorial geometry.

\begin{definition}\label{def:isoperimetric_inequality} We say that a simply connected polyhedral complex \textit{satisfies an isoperimetric inequality} if there exists a function $\varphi: \bbN \ra \bbN$ such that every embedded loop of length $n$ can be filled by a combinatorial disc of area at most $\varphi(n)$.
\end{definition}

\begin{lem}
 Simply connected hyperbolic complexes satisfying an isoperimetric inequality have the Strong Concatenation Property.
\end{lem}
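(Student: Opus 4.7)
The plan is to prove both halves of the Strong Concatenation Property by the same mechanism: from a hypothetical violation at a vertex $v$, I will build a closed loop through $v$ of length bounded in terms of the hyperbolicity constant $\delta$ alone, and then use the isoperimetric inequality to bound the angle at $v$. The bridge between disc diagrams and link-angles is the following elementary observation I would record first: if $v$ is a boundary vertex of a combinatorial disc diagram $D$ visiting $v$ only once, with adjacent boundary edges $e, e'$, then the $2$-cells of $D$ incident to $v$ form a fan whose image produces a path in $\mbox{lk}(v)^{(1)}$ from $e$ to $e'$ of length equal to the number of such $2$-cells; hence $\angle_v(e, e') \leq \mbox{Area}(D) \leq \varphi(|\partial D|)$.

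For the first part, I would assume that $\gamma_1, \gamma_2$ are geodesics meeting at $v$ with $\gamma_1 \cup \gamma_2$ not a geodesic, and let $\alpha$ be a geodesic joining the far endpoints $x, y$; a length estimate forces $\alpha$ to avoid $v$, since otherwise $|\alpha| \geq |\gamma_1| + |\gamma_2|$. After truncating $\gamma_1, \gamma_2$ at a universal distance $D_0$ of order $\delta$ from $v$ to points $p_1, p_2$, I would use $\delta$-slimness of the triangle $(\gamma_1, \gamma_2, \alpha)$ to produce a path $\beta$ from $p_1$ to $p_2$ of length $O(\delta)$ avoiding $v$: either by a direct short geodesic when the truncation falls within the tripod part of the thin triangle, or by first jumping to a nearby point of $\alpha$ and running along it when the truncation lies past the tripod centre and is instead $\delta$-close to $\alpha$. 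The resulting loop $\gamma_1|_{[v,p_1]} \cdot \beta \cdot \gamma_2|_{[p_2, v]}$ has length at most a universal constant $K_1(\delta)$, and a disc diagram filling it has area at most $\varphi(K_1)$, producing the uniform upper bound $A_1 := \varphi(K_1)$ on $\angle_v(\gamma_1, \gamma_2)$.

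For the second part, I would apply the analogous argument to the geodesic quadrilateral with sides $\alpha$, $\gamma|_{[\pi(x), \pi(y)]}$, $[x, \pi(x)]$ and $[y, \pi(y)]$. A $2\delta$-slimness argument places $v$ within $2\delta$ of one of the three other sides; combining this with the coarse closest-point projection property (any point of $[x, \pi(x)]$ projects on $\gamma$ within $O(\delta)$ of $\pi(x)$, and symmetrically for $[y, \pi(y)]$) forces $v$ to lie $2\delta$-close to $\alpha$, provided $R$ is taken larger than a suitable multiple of $\delta$. The same truncation-and-filling strategy applied to the two halves of $\gamma$ on either side of $v$, connected via a portion of $\alpha$, then produces a loop through $v$ of length at most some $K_2(\delta)$, giving a bound $A_2 := \varphi(K_2)$ on the angle of $\gamma$ at $v$. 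Taking $A := \max(A_1, A_2)$ and $R := R(\delta)$ yields the desired constants.

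The main obstacle I expect is arranging the truncations so that the loop constructed genuinely meets $v$ only at its endpoints, which is what makes the angle-to-area inequality applicable in its clean form. Already in part 1, a small case analysis is required according to whether the Gromov product $(x|y)_v$ is larger or smaller than the truncation distance $D_0$, corresponding to two distinct local geometric pictures (tripod vs. running close to $\alpha$). In part 2, the extra difficulty is calibrating $R$ so that the slim-quadrilateral argument cleanly identifies $\alpha$ rather than one of the ``vertical'' sides as the side close to $v$; this is what drives the quantitative dependence of $R$ on $\delta$.
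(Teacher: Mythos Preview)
Your proposal is correct and follows essentially the same strategy as the paper: use $\delta$-hyperbolicity to manufacture a short closed loop through $v$ (of length $O(\delta)$), fill it by a disc of area at most $\varphi(O(\delta))$, and read off the link-angle bound from the fan of $2$-cells incident to $v$. The only cosmetic difference is in part~2, where the paper invokes the tree-approximation theorem to locate a sub-segment of $\alpha$ at bounded Hausdorff distance from a sub-segment of $\gamma$ around $v$, while you reach the same conclusion via slim quadrilaterals together with the projection estimate $d(v,\pi(x))\le O(\delta)$ whenever $v$ is $2\delta$-close to $[x,\pi(x)]$; both routes produce the required loop of length $O(\delta)$ and the constants $(A,R)=(\varphi(O(\delta)),O(\delta))$.
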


\begin{proof}
 Let $\delta$ be the hyperbolic constant of $X$, and $\varphi$ its isoperimetric function.\\
 
 We start by the first condition. Let $\gamma_1, \gamma_2$ be two geodesics of $X$ meeting at a vertex $v$, and $\gamma$ a geodesic between the two endpoints of the path $\gamma_1 \cup \gamma_2$. For $i \in \{1, 2\}$, if $|\gamma_i|>2\delta$,  choose the point $x_i \in \gamma_i$ at distance $2\delta$ from $v$. By the hyperbolicity condition, $x_i$ is at distance at most $\delta$ from the other two sides of the geodesic triangle $\gamma_1 \cup \gamma_2 \cup \gamma$, so choose a point $u_i \in \gamma_1 \cup \gamma_2 \cup \gamma$ at distance at most $\delta$ from $v_i$, and which is not on $\gamma_i$. Choose a geodesic path $\tau_{u_i, v_i}$ between $u_i$ and $v_i$. If $|\gamma_i| \leq 2 \delta$, set $v_i = u_i$ to be the endpoint of $\gamma_i$ distinct from $v$. 
 
 If $\gamma$ does not contain $v$, then one could extract from the loop $(\gamma_1)_{v, v_1} \cup \tau_{v_1, u_1} \cup (\gamma)_{u_1, u_2} \cup  \tau_{u_2, v_2} \cup (\gamma_2)_{v_2, v}$ an embedded loop of length at most $12\delta$ containing a neighbourhood of $v$ in $\gamma_1 \cup \gamma_2$. In particular, this implies that $\angle_v(\gamma_1, \gamma_2) \leq \varphi(12\delta)$. \\
 
 Let us now show the second condition. Let $\gamma$ be a geodesic segment of $X$, $v$ a vertex of $\gamma$. Let $x, y$ be two vertices of $X$, $\pi(x), \pi(y)$ be projections of $x, y$ respectively on $\gamma$, such that $\pi(x)$ and $\pi(y)$ are at distance at least $100 \delta$ from $v$. Let us assume that there exists a geodesic $\gamma_{x,y}$ between $x$ and $y$ which does not contain $v$. 
 
 Let $\gamma'$ be the sub-segment of $\gamma$ centred at $v$ and of length $100\delta$. Using the theorem of approximation by trees in a hyperbolic space \cite[Thm. 1 Ch. 8]{CoornaertDelzantPapadopoulos} , it follows that there exists a sub-segment $\gamma_{x,y}'$ of $\gamma_{x,y}$ such that $\gamma'$ and $\gamma_{x,y}'$ are at Hausdorff distance at most $8\delta$. Projecting the endpoints of $\gamma'$ to $\gamma_{x,y}'$, it is thus possible to construct an embedded loop of $X$, of length at most $300\delta$, which contains $\gamma'$. By filling this loop, it follows that the angle of $\gamma$ at $v$ is bounded above by $\varphi(300\delta)$.\\
 
 Thus $X$ satisfies the Strong Concatenation Property with constants $( \varphi(300\delta), 100\delta)$. 
\end{proof}

\begin{rmk} 
	Note that \textit{non locally finite} hyperbolic complexes do not necessarily satisfy any isoperimetric inequality, let alone a linear one. For instance, consider the suspension of the simplicial real line, with its triangular complex structure. Given any number $n$, there exists a geodesic bigon of length $4$ between the two apices that require at least $n$ triangles to be filled. 
\end{rmk}

\section{Application: The tame automorphism group of $\SLC$}

 Here we use the Criterion \ref{prop:Criterion_acylindrically_hyperbolic} to study the subgroup $\mbox{Tame}(\mbox{SL}_2) \subset \mbox{Bir}(\bbP^3(\bbC))$. We prove the following: 
 
 \begin{thm}\label{thm:Tame_Acylindrically_Hyperbolic}
 The group  $\mbox{Tame}(\mbox{SL}_2)$ is acylindrically hyperbolic. 
\end{thm}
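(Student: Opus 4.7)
The plan is to apply the Link Criterion (Proposition \ref{prop:Criterion_acylindrically_hyperbolic}, Theorem B) to the action of $\Tame$ on the CAT(0) square complex $X$ constructed by Bisi--Furter--Lamy. The first task is to verify that this complex fits into the framework of the criterion. Being CAT(0), $X$ has angle of view at most $\pi$ by Example \ref{ex:CAT0_angle}, so it satisfies the Strong Concatenation Property with constants $(3\pi, 0)$; by the remark following the proof of the bounded-angle-of-view lemma, the hyperbolicity hypothesis in Proposition \ref{prop:Criterion_acylindrically_hyperbolic} can be dropped in this setting, so we do not need to separately check that $X$ is Gromov hyperbolic. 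Simple connectedness is part of the construction of Bisi--Furter--Lamy. It thus remains to exhibit a vertex $v \in X$ and a group element $g \in \Tame$ satisfying conditions $1)$ and $2)$ of the criterion.

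Next, I would recall the combinatorial structure of $X$. The vertices of the Bisi--Furter--Lamy complex come in three types, corresponding to three types of cosets of certain distinguished stabilisers (the affine-type, Jonqui\`eres-type and a third mixed-type stabiliser), and squares record compatible triples of such cosets. For the vertex $v$ I would pick one of type ``maximal'' — concretely, the canonical vertex whose stabiliser $\Tame_v$ is the largest of the three standard stabilisers, so that $\Tame_v$ contains both affine automorphisms of $\SLC$ and an infinite unipotent subgroup consisting of elementary shears. The link $\mbox{lk}(v)$ in a $2$-dimensional square complex is a graph, whose vertices correspond to the edges of $X$ adjacent to $v$, and this graph is naturally identified with a coset graph of $\Tame_v$ acting on certain families of its subgroups.

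For condition $1)$, I would argue that $\Tame_v$ acts on $\mbox{lk}(v)$ with unbounded orbits. The idea is to exhibit a single edge $e$ at $v$ and an infinite family $\{h_n\}\subset \Tame_v$ (for instance an infinite family of elementary shears, composed with suitable affine elements) such that $d_{\mbox{lk}(v)}(e, h_n e) \to \infty$. This reduces to a local combinatorial computation in the link, and ultimately boils down to the fact that the stabiliser $\Tame_v$ is not contained in any bounded union of the stabilisers of the edges at $v$; this non-containment should follow from the amalgamated/product structure of $\Tame_v$ read off from the Bisi--Furter--Lamy description. For condition $2)$, I would search for a hyperbolic element $g \in \Tame$ for the action on $X$ whose axis leaves $v$ quickly, so that $g\Tame_v g^{-1}$ shares with $\Tame_v$ only a subgroup that pointwise fixes a geodesic bigon and is therefore finite. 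A natural candidate is a commutator or a product of two elementary automorphisms of different triangular-type stabilisers — of the sort that Bisi--Furter--Lamy already use to exhibit ping-pong pairs in $\Tame$.

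The main obstacle I anticipate is condition $2)$: weak malnormality of $\Tame_v$ in $\Tame$. Condition $1)$ is essentially a local statement about infinite shears acting on a graph and should be routine. But condition $2)$ is a global statement: it requires producing one explicit $g$ and then showing that $\Tame_v \cap g \Tame_v g^{-1}$ consists only of automorphisms with bounded action on $X$ (and hence, since $\Tame$ acts without global fixed point and $X$ is CAT(0), these elements fix some compact subcomplex). This will require a careful use of the normal-form / uniqueness-of-decomposition results for tame automorphisms established by Bisi--Furter--Lamy, together with a concrete dynamical computation identifying the intersection of two specific vertex stabilisers. Once this is done, Proposition \ref{prop:Criterion_acylindrically_hyperbolic} delivers acylindrical hyperbolicity, and the additional consequences (SQ-universality, existence of free normal subgroups) follow from the standard properties of acylindrically hyperbolic groups, after one observes that $\Tame$ is not virtually cyclic (it contains, e.g., a non-abelian free group already among the elementary automorphisms).
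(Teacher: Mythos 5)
Your overall strategy is the same as the paper's: apply the Link Criterion (Proposition \ref{prop:Criterion_acylindrically_hyperbolic}) to the Bisi--Furter--Lamy CAT(0) square complex, get the Strong Concatenation Property from the CAT(0) bounded angle of view, use the unboundedness of the action of a vertex stabiliser on its link for condition $1)$, and find one element $g$ giving weak malnormality for condition $2)$. Two remarks on condition $1)$: it is not ``routine'' from first principles --- the paper does not reprove it but quotes it directly from Bisi--Furter--Lamy (Proposition \ref{prop:unbounded_orbit}, i.e.\ their Propositions 3.6, 3.7 and 4.1, for a vertex of type $1$); citing that result is the intended move, and your plan to rederive it from an ``amalgamated/product structure'' of the stabiliser is not something the paper supports.

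The genuine gap is condition $2)$, which you correctly identify as the crux but then leave unproved, and the fallback justification you sketch is wrong in this setting. You argue that an element of $\Tame_v \cap g\Tame_v g^{-1}$ would ``pointwise fix a geodesic bigon'' or ``fix some compact subcomplex'' and would ``therefore be finite'': in $X$ this implication fails, because the action is very far from proper --- by Proposition \ref{prop:Stab_Square} even the pointwise stabiliser of a single square is an infinite group parametrised by $(a,b,c,d)\in\bbC^2\times\bbC^2$ with $ab\neq 0$, so fixing a compact (or bounded) subcomplex gives no finiteness whatsoever. Finiteness has to come from algebra, and this is exactly what the paper does: it takes the explicit hyperbolic element $g$ of Lemma \ref{lem:Hyperbolic_Isometry} (Example 6.2 of Bisi--Furter--Lamy), writes the equation $gf = f'g$ with $f, f'$ in the explicit square stabiliser of Proposition \ref{prop:Stab_Square}, and by comparing monomials deduces $a^6=1$, $b^6=1$, $c=d=0$, so that $\mbox{Stab}(C)\cap g\mbox{Stab}(C)g^{-1}$ is finite; it then transfers this to the pair of vertices $v$, $g^2v$ by observing that the combinatorial interval between them is precisely the $4\times 4$ grid $K$ (intervals in a CAT(0) square complex embed in $\bbR^2$), so that up to finite index any element fixing $v$ and $g^2v$ fixes $K$ pointwise, hence fixes $C$ and $gC$. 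Without an explicit candidate $g$ and a computation of this kind (or an appeal to some uniqueness-of-decomposition statement that you would still have to carry out), your proposal does not establish weak malnormality, and so the criterion cannot yet be applied.
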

 
 Recall that $\mbox{Tame}(\mbox{SL}_2)$ can be defined (see \cite[Proposition 4.19]{LamyCremonaSquareComplexes}) as the subgroup  $\mbox{Tame}_q(\bbC^4)$ of $\mbox{Aut}(\bbC^4)$ generated by the orthogonal group $O(q)$ associated to the quadratic form $q(x_1,x_2,x_3,x_4) = x_1x_4 - x_2x_3$ and the subgroup  consisting of automorphisms of the form 
 $$(x_1,x_2,x_3,x_4) \mapsto (x_1, x_2+x_1P(x_1,x_3),x_3,x_4+x_3P(x_1,x_3)), P\in \bbC[x_1,x_3].$$ 
 This group was studied in \cite{LamyCremonaSquareComplexes} from the point of view of its action on a CAT(0) square complex $X$, which we now describe.\\
 
 \noindent \textbf{Vertices of $X$.} To each element $(f_1, f_2, f_3, f_4)$ of $\Tame$, one associates: 
 \begin{itemize}
 	\item a vertex $[f_1] $, said \textit{of type $1$}, corresponding to the orbit $\bbC^* \cdot f_1$,
 	\item a vertex $[f_1, f_2] $, said \textit{of type $2$}, corresponding to the orbit $\mathrm{GL}_2(\bbC) \cdot (f_1, f_2)$,
 	\item a vertex $[f_1, f_2, f_3, f_4]$, said \textit{of type $3$}, corresponding to the orbit $O(q) \cdot (f_1, f_2, f_3, f_4)$.
  \end{itemize}
  
  \noindent \textbf{Edges of $X$.} To each element $(f_1, f_2, f_3, f_4)$ of $\Tame$, one associates:
  \begin{itemize}
  	\item an edge joining the type $1$ vertex $[f_1]$ and the type $2$ vertex $[f_1, f_2]$.
  	\item  an edge joining the type $2$ vertex $[f_1, f_2]$ and the type $3$ vertex $[f_1, f_2, f_3, f_4]$.
  \end{itemize}
  
  \noindent \textbf{Squares of $X$.} To each element $(f_1, f_2, f_3, f_4)$ of $\Tame$, one associates a square with vertex set being, in cyclic order, $[f_1]$,  $[f_1, f_2]$, $[f_1, f_2, f_3, f_4]$, $[f_1, f_3]$.\\
  
  \noindent \textbf{Action of $\Tame$.} The group $\Tame$ acts by isometries on $X$ as follows. For every element $(f_1, f_2, f_3, f_4)$ of $\Tame$ and every $g$ of $\Tame$, we set: 
  \begin{itemize}
  \item $g \cdot [f_1] := [f_1 \circ g^{-1}]$,
  \item $g \cdot [f_1, f_2] := [f_1\circ g^{-1}, f_2\circ g^{-1}]$, 
  \item $g \cdot [f_1, f_2, f_3, f_4] := [f_1\circ g^{-1}, f_2\circ g^{-1}, f_3\circ g^{-1}, f_4\circ g^{-1}]$.
    \end{itemize}
  
  A central result of \cite{LamyCremonaSquareComplexes} is the following: 
  
\begin{thm}
The square complex $X$ is CAT(0) and hyperbolic. 
\end{thm}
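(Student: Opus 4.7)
The plan is to establish the two conclusions---CAT(0) and hyperbolicity---in that order, using standard tools for square complexes.

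\textbf{CAT(0) property.} I would appeal to Gromov's link condition: a simply connected square complex is CAT(0) if and only if the link of every vertex is a simplicial graph containing no cycle of combinatorial length less than $4$. This splits the task into verifying simple connectivity and the link condition. For simple connectivity, since the squares of $X$ are indexed by elements of $\Tame$ and their vertices record orbits under $\bbC^*$, $\mathrm{GL}_2(\bbC)$, and $O(q)$, my approach would be to exhibit a presentation of $\Tame$ whose defining relations are precisely the boundaries of the squares of $X$; any loop in the $1$-skeleton would then be filled by a disc of squares using this presentation. This is the analogue, for $\SLC$, of the classical Jung--van der Kulk description of $\AutPlane$ as an amalgamated product, adapted to the richer $O(q)$-symmetry at play. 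For the link condition, I would compute the link of each of the three types of vertex explicitly. The links at type-$1$ and type-$3$ vertices are naturally bipartite, since their neighbours carry different types, so triangles are automatic and only bigons need be excluded. The link at a type-$2$ vertex $[f_1,f_2]$ is the most delicate: its vertices parametrise the type-$1$ and type-$3$ extensions of $(f_1,f_2)$, and ruling out bigons and triangles amounts to a uniqueness-of-factorisation statement for tame automorphisms sharing two coordinates.

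\textbf{Hyperbolicity.} Once CAT(0)-ness is secured, I would invoke the flat plane theorem: a CAT(0) space is $\delta$-hyperbolic if and only if it contains no isometrically embedded Euclidean plane. Any such plane in $X$ would decompose as a doubly periodic tiling by squares, each with the rigid type pattern $1$--$2$--$3$--$2$ in cyclic order; the two generating translations of this tiling would yield two commuting elements of $\Tame$ preserving the plane. Interpreting each edge of $X$ as an elementary coordinate modification, one should derive a contradiction from the impossibility of sustaining two independent such ``translations'' inside $\Tame$. An alternative, more quantitative route would be to establish a linear isoperimetric inequality directly from the combinatorial structure of $X$, which would also recover hyperbolicity via the usual CAT(0) characterisation.

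\textbf{Expected main obstacle.} The main difficulty I anticipate is the link condition at type-$2$ vertices, which encapsulates the genuine algebraic complexity of $\Tame$: excluding bigons and triangles there reduces to a non-trivial statement about how two tame automorphisms of $\SLC$ with the same first two coordinates are related, and has no easy counterpart in the simpler $\AutPlane$ case. A secondary difficulty is that $X$ is highly non-locally-finite: the links are infinite because $\bbC^*$, $\mathrm{GL}_2(\bbC)$, and $O(q)$ are, so the verification must proceed via the algebraic structure of $\Tame$ rather than by combinatorial enumeration. Ruling out flat planes inherits the same constraint, since one cannot appeal to compactness to bound the relevant configurations and must instead rely on rigidity of the factorisation of tame automorphisms.
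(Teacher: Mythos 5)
This theorem is not proved in the paper at all: it is quoted from Bisi--Furter--Lamy \cite{LamyCremonaSquareComplexes}, so there is no internal proof to compare your attempt against. Judged on its own terms, your text is a strategy outline rather than a proof: every substantive step is named and deferred. Simple connectivity of $X$ is reduced to ``exhibit a presentation of $\Tame$ whose relations are the square boundaries'' --- but producing such a presentation is essentially the main content of the Bisi--Furter--Lamy construction, not a routine verification; the link condition at type-$2$ vertices is explicitly flagged as the hard point and left as ``a uniqueness-of-factorisation statement'' that you do not formulate, let alone prove; and no flat plane is actually excluded. As it stands, nothing is established beyond the (correct) observation that Gromov's link condition is the natural tool for the CAT(0) claim.

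There is also a concrete mathematical obstruction in the hyperbolicity step. The flat plane theorem you invoke holds for \emph{proper} (locally compact) CAT(0) spaces with a cocompact action; $X$ is badly non-locally-finite (the links are infinite graphs, as you yourself note), so the theorem does not apply in the form you cite, and ``no isometrically embedded flat plane'' does not by itself imply hyperbolicity here. For CAT(0) square complexes the correct substitute is a quantitative statement, e.g.\ a uniform bound on the size of isometrically embedded flat grids $[0,n]\times[0,n]$, or a direct verification of a thin-triangles/isoperimetric criterion --- and note that $X$ does contain $4\times 4$ grids (Figure \ref{fig:Only_Figure}), so the argument must bound their size rather than exclude them. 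Your alternative suggestion of proving a linear isoperimetric inequality is closer to a viable route, but again it is only named; the Remark at the end of Section 2 of the present paper shows that non-locally-finite complexes can fail even weak isoperimetric bounds, so this too requires genuine work specific to $X$. In short: the plan identifies the right landmarks, but both halves of the theorem remain unproved, and the flat-plane route would need to be replaced or substantially strengthened before it could work.
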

 
We recall further properties of the action that we will need.

 \begin{prop}[{\cite[Lemma 2.7]{LamyCremonaSquareComplexes} }]\label{prop:Stab_Square}
  The action is transitive on the squares of $X$, and the pointwise stabiliser of a given square is conjugate to the subgroup 
  $$ \{ (x_1,x_2,x_3,x_4) \mapsto (ax_1, b(x_2+cx_1), b^{-1}(x_3+dx_1), a^{-1}(x_4+cx_3+dx_2)), ~~a,b,c,d \in \bbC, ab \neq 0 \}$$
  of $\mbox{Tame}(\mbox{SL}_2).$\qed
 \end{prop}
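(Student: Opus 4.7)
The proof splits into two parts: establishing that the action is transitive on squares, and computing the pointwise stabiliser of a single square.

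For transitivity, I would exploit the labelling of squares by tuples $(f_1, f_2, f_3, f_4) \in \Tame$. The \emph{identity square}, associated to the identity automorphism $(x_1, x_2, x_3, x_4) \in \Tame$, has vertices $[x_1]$, $[x_1, x_2]$, $[x_1, x_2, x_3, x_4]$, $[x_1, x_3]$. Given any square associated to $(f_1, f_2, f_3, f_4)$, setting $g := (f_1, f_2, f_3, f_4)^{-1}$ and invoking the action formula $g \cdot [f] = [f \circ g^{-1}]$ yields $g \cdot [x_i, \ldots] = [f_i, \ldots]$ on each vertex, since $x_i \circ (f_1, f_2, f_3, f_4) = f_i$. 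This proves transitivity, and reduces the computation of the pointwise stabiliser of any square to that of the identity square by conjugation.

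For the stabiliser of the identity square, I would let $g \in \Tame$ fix it pointwise, write $g^{-1} = (P_1, P_2, P_3, P_4)$, and unpack the definition of each orbit. The condition at $[x_1]$ forces $P_1 = a x_1$ with $a \in \bbC^*$; combined with the condition at $[x_1, x_2]$, this forces $P_2 = b(x_2 + cx_1)$ for some $b \in \bbC^*$, $c \in \bbC$; symmetrically, the condition at $[x_1, x_3]$ forces $P_3 = b'(x_3 + dx_1)$ for some $b' \in \bbC^*$, $d \in \bbC$. The final condition at $[x_1, x_2, x_3, x_4]$ identifies $(P_1, P_2, P_3, P_4)$ with a linear map in $O(q)$. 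Expanding $P_1 P_4 - P_2 P_3 = x_1 x_4 - x_2 x_3$ and matching coefficients forces $b' = b^{-1}$ (from the coefficient of $x_2 x_3$) and determines the remaining coefficients of $P_4$ in terms of $a, b, c, d$, yielding the stated explicit form.

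To complete the argument, one must verify the converse: that every element of the stated four-parameter family genuinely lies in $\Tame$. I would express such an element as a composition of the generators of $\Tame$ recalled above, namely the diagonal $O(q)$-element $(ax_1, bx_2, b^{-1}x_3, a^{-1}x_4)$, the special automorphism $(x_1, x_2 + c x_1, x_3, x_4 + c x_3)$ (arising from the constant polynomial $P = c$ in the generating family), and a second special automorphism in its $O(q)$-conjugated form $(x_1, x_2, x_3 + dx_1, x_4 + dx_2)$ (obtained by conjugating $(x_1, x_2 + dx_1, x_3, x_4 + dx_3)$ by the involution of $O(q)$ that swaps $x_2$ and $x_3$). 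The main technical point is essentially bookkeeping: when these three factors are composed, the cross-terms arising from $O(q)$-preservation of $q$ must match those determined by the linear algebra in the previous step, which is where any missed lower-order correction would surface.
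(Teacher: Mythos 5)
Your overall route is sound, and it is worth noting that the paper itself offers no argument for this statement: it is quoted, with a tombstone, as Lemma~2.7 of Bisi--Furter--Lamy, so any proof is ``new'' relative to the paper, and what you propose is essentially the natural verification. The transitivity argument (the square attached to $f=(f_1,f_2,f_3,f_4)$ is the image of the identity square under $f^{-1}$, since $x_i\circ f = f_i$) is correct, as is the reduction to the identity square and the derivation $P_1=ax_1$, $P_2=b(x_2+cx_1)$, $P_3=b'(x_3+dx_1)$ with $b'=b^{-1}$ from the orbit conditions at the four vertices together with the $O(q)$-condition at the type-$3$ vertex.

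There is, however, one concrete point you cannot wave through: matching coefficients in $P_1P_4-P_2P_3=x_1x_4-x_2x_3$ gives $P_4=a^{-1}(x_4+cx_3+dx_2+cd\,x_1)$, \emph{not} ``the stated explicit form''. The formula as printed (lacking the $cd\,x_1$ term) cannot describe the stabiliser: for $cd\neq 0$ such a map sends $q$ to $q-cd\,x_1^2$, hence lies neither in $O(q)$ nor even in $\Tameq$ (every generator, and so every element, of that group preserves $q$), and the printed family is not closed under composition. Your own closing remark about a possible ``missed lower-order correction'' is exactly where this surfaces: composing the diagonal element with the two transvections $(x_1,x_2+cx_1,x_3,x_4+cx_3)$ and $(x_1,x_2,x_3+dx_1,x_4+dx_2)$ produces precisely the cross-term $cd\,x_1$, consistent with the coefficient computation but not with the statement as quoted. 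So your method is right, but to be complete you must carry the matching to the end, record the corrected form of $P_4$, and flag the missing $cd\,x_1$ term as a typo in the quoted statement; as written, the assertion that the computation ``yields the stated explicit form'' is false. (The correction is harmless for the way the proposition is used later: the finiteness of $\mbox{Stab}(C)\cap g\mbox{Stab}(C)g^{-1}$ in the proof of Theorem~\ref{thm:Tame_Acylindrically_Hyperbolic} survives it.)
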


 \begin{prop}[{\cite[Propositions 3.6, 3.7 and 4.1]{LamyCremonaSquareComplexes} }]\label{prop:unbounded_orbit}
  The link of a vertex of type $1$ has infinite diameter. Moreover, the action of the stabiliser of such a vertex on its link has unbounded orbits.\qed
 \end{prop}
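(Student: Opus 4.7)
Both claims concern the link $\mbox{lk}([x_1])$ of the base vertex of type $1$, and the second (unbounded orbits) implies the first (infinite diameter), so it suffices to exhibit a single element of $G_{[x_1]}$ whose orbit on $\mbox{lk}([x_1])$ is unbounded. The overall plan is to relate $\mbox{lk}([x_1])$ and the action of $G_{[x_1]}$ to a Bass--Serre tree coming from the Jung--van der Kulk decomposition of the polynomial automorphism group of a plane, and to produce the required hyperbolic element via a classical ping-pong argument on that tree.

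First, I would unwind the definitions. A vertex of $\mbox{lk}([x_1])$ corresponds to a type-$2$ orbit $[x_1,f]$ such that $(x_1,f)$ extends to some tame automorphism of $\SLC$; an edge corresponds to a square of $X$ at $[x_1]$, joining $[x_1,f]$ to $[x_1,f']$ precisely when some tuple $(x_1,f,f',h)$ lies in $\Tame$. The stabiliser $G_{[x_1]}$ consists of those tame automorphisms sending $x_1$ to a nonzero scalar multiple of itself, and acts on the link by $\phi \cdot [x_1,f] = [x_1, f\circ \phi^{-1}]$. Using Proposition \ref{prop:Stab_Square} and the transitivity of the $\Tame$-action on squares, the finer combinatorics of $\mbox{lk}([x_1])$ can then be described in terms of double cosets of explicit Borel-type subgroups.

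Second, I would exploit the analogy between tame automorphisms preserving $x_1$ and polynomial automorphisms of the two-dimensional fibre $\{x_1 = c\}$. The subgroup of $G_{[x_1]}$ fixing $x_1$ pointwise surjects onto a group containing a natural copy of $\mathrm{Aut}(\bbC^2)$, which by Jung--van der Kulk splits as a nontrivial amalgamated product and so acts on a Bass--Serre tree $T$ with hyperbolic elements (this is exactly the splitting exploited in \cite{MinasyanOsinTrees}). I would then build a $G_{[x_1]}$-equivariant map $\mbox{lk}([x_1]) \to T$ reflecting the combinatorics of tame factorisations at $[x_1]$, and use a standard ping-pong argument to produce elements of $G_{[x_1]}$ whose image in $\mathrm{Aut}(\bbC^2)$ is hyperbolic on $T$; provided the equivariant map does not collapse such an orbit to a bounded set, its preimage in $G_{[x_1]}$ then has unbounded orbit on $\mbox{lk}([x_1])$, which is the desired conclusion.

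The hard part is the construction of this equivariant map, and more specifically the verification that it is coarsely distance-preserving on orbits. This requires the structural rigidity that short combinatorial paths in $\mbox{lk}([x_1])$ force short tame factorisations at $[x_1]$: upper bounds on link distances come for free from explicit paths built out of elementary tame transformations, whereas lower bounds require ruling out unexpected shortcuts through the square structure of $X$. This rigidity is precisely the combinatorial content of the CAT(0) cube complex construction of Bisi--Furter--Lamy in \cite{LamyCremonaSquareComplexes}, which I would invoke as a black box to conclude.
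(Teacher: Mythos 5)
There is nothing to compare on the paper's side: the statement is imported verbatim from Bisi--Furter--Lamy (\cite[Propositions 3.6, 3.7 and 4.1]{LamyCremonaSquareComplexes}), with the \emph{qed} symbol signalling that no proof is given in this article. Judged as a standalone argument, your proposal has a genuine gap, and in fact a circularity, exactly at the point you flag as ``the hard part'': the rigidity statement that short paths in $\mbox{lk}([x_1])$ force short tame factorisations --- equivalently, the lower bounds on link distances that prevent your equivariant map from collapsing orbits --- is precisely the content of the propositions of \cite{LamyCremonaSquareComplexes} being quoted here. Invoking ``the combinatorial content of the CAT(0) square complex construction'' as a black box at that step is invoking the statement you set out to prove. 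Everything you construct explicitly (paths built from elementary automorphisms, elements of $G_{[x_1]}$ obtained by ping-pong) can only produce \emph{upper} bounds on distances in the link; unboundedness of orbits and infinite diameter are lower-bound statements, and your sketch reduces them to themselves.

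The intermediate steps are also asserted rather than established, and they are not routine. That the subgroup of $G_{[x_1]}$ preserving $x_1$ maps onto a group containing a natural copy of $\AutPlane$ needs care: elements of $G_{[x_1]}$ only preserve $x_1$ up to a scalar, hence permute the fibres of $x_1$, and the fibre over $0$ in the quadric is $\bbC^*\times\bbC$ rather than $\bbC^2$, so even defining a restriction homomorphism requires an argument. Likewise, the existence of a $G_{[x_1]}$-equivariant map $\mbox{lk}([x_1])\to T$ to the Jung--van der Kulk Bass--Serre tree that is coarsely distance-preserving on orbits is exactly the kind of statement whose proof in the literature goes through the degree and valuation estimates for tame automorphisms of the quadric (Lamy--V\'en\'ereau, Bisi--Furter--Lamy): in the cited source, unboundedness is obtained by explicit degree computations showing that suitable elements of the stabiliser move a fixed vertex of the link arbitrarily far, not by a comparison map to a tree. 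A correct self-contained proof would have to either reproduce such degree estimates or construct your comparison map and prove its lower bound honestly; as written, the proposal is a strategy outline whose decisive step is missing.
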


We want to construct a super-contraction for this action. To that end, we will use the following hyperbolic isometry considered in \cite{LamyCremonaSquareComplexes}:

\begin{lem}[{\cite[Example 6.2]{LamyCremonaSquareComplexes} }]\label{lem:Hyperbolic_Isometry}
 The element $g \in \mbox{Tame}(\mbox{SL}_2)$ defined by 
 
 $$g(x_1, x_2, x_3, x_4) = :=(x_4 + x_3x_1^2 +x_2x_1^2+x_1^5, x_2+x_1^3, x_3+x_1^3, x_1)$$ 
 acts hyperbolically on $X$. More precisely, there exists a $4\times4$ grid isometrically embedded in $X$ and a vertex $v$ of type $1$, such that the vertices $v, gv, g^2v$ are as in Figure \ref{fig:Only_Figure}.  \qed
\end{lem}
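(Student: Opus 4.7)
The plan is to establish hyperbolicity of $g$ by exhibiting an isometrically embedded flat $4 \times 4$ subcomplex of $X$ containing the three consecutive iterates $v, gv, g^2 v$ on a common combinatorial line, from which the axis and hyperbolicity follow by standard CAT(0) arguments. I would proceed in four steps.

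First, I would pick the type $1$ vertex $v := [x_1]$ associated with the identity automorphism and compute $gv$ and $g^2 v$ explicitly using the action rule $g \cdot [f_1, \ldots, f_k] = [f_1 \circ g^{-1}, \ldots, f_k \circ g^{-1}]$. Inverting $g$ by solving the triangular system given by its components yields
$$g^{-1}(y_1, y_2, y_3, y_4) = (y_4,\, y_2 - y_4^3,\, y_3 - y_4^3,\, y_1 - y_2 y_4^2 - y_3 y_4^2 + y_4^5),$$
and from this one reads off $gv = [x_4]$, and similarly for $g^2 v$ by applying $g^{-1}$ once more. The type $2$ and type $3$ vertices bordering these type $1$ vertices along the prospective axis are obtained in the same way by applying $g^{-1}$ coordinatewise.

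Second, I would exhibit the squares of $X$ composing the grid. The generating set for $\Tame$ consists of $O(q)$ together with the triangular tame transformations $(x_1, x_2 + x_1 P, x_3, x_4 + x_3 P)$ for $P \in \bbC[x_1, x_3]$, and $g$ admits an explicit factorisation into an alternating product of such generators. Each factor in this decomposition corresponds to passing from one square of $X$ to an adjacent one, so the factorisation naturally produces a strip of squares containing $v$, $gv$ and $g^2 v$. Widening this strip to a $4 \times 4$ grid by choosing three further parallel strips—associated to the other orbits of coordinates under $\mathrm{GL}_2(\bbC)$ appearing in type $2$ vertices—produces the desired subcomplex, which I would verify matches the configuration in Figure \ref{fig:Only_Figure}.

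Third, I would check that the grid is isometrically embedded. Since $X$ is CAT(0) by the Bisi--Furter--Lamy theorem, it suffices to verify the link condition at each interior vertex $w$ of the grid: namely, that the four squares of the grid incident to $w$ have, in the link of $w$ in $X$, pairwise distinct bounding edges forming an embedded $4$-cycle of length exactly $4$. Using the explicit description of stabilisers of squares in Proposition \ref{prop:Stab_Square} together with the structure of links of vertices of each type (Proposition \ref{prop:unbounded_orbit} for type $1$, and analogous local descriptions for types $2$ and $3$), one checks this combinatorial condition directly on each interior vertex of the grid. Convexity of flat sub-rectangles in a CAT(0) complex then promotes the local isometric embedding to a global one.

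Fourth, once the grid is known to be an isometric Euclidean $4 \times 4$ patch containing a combinatorial line through $v, gv, g^2 v$, it follows that $n \mapsto g^n v$ is a quasi-isometric embedding of $\bbZ$, so that by the CAT(0) flat/axis dichotomy $g$ admits an axis and acts as a non-trivial translation along it, hence hyperbolically. The main technical obstacle is Step 3: the local link verification at the interior type $2$ and type $3$ vertices is the heart of the argument, since the other steps are either bookkeeping (Step 1) or a combinatorial assembly (Step 2), whereas checking that no two of the four incident squares are identified and that no shorter diagonal path exists in the link requires a careful computation inside $\Tame$ using the presentation of its vertex stabilisers. This is precisely the computation carried out in \cite[Example 6.2]{LamyCremonaSquareComplexes}.
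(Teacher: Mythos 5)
The first thing to note is that the paper does not prove this statement at all: it is imported verbatim from \cite{LamyCremonaSquareComplexes} (Example 6.2), which is why the lemma carries a \qed in its statement. So the only meaningful question is whether your sketch would stand on its own, and as written it would not: the decisive content of the lemma is precisely the existence of the isometrically embedded $4\times 4$ grid in the configuration of Figure \ref{fig:Only_Figure}, and your Steps 2 and 3 --- producing the squares of the grid from a factorisation of $g$ into elementary generators and then verifying the link conditions --- are only described, not carried out; you end by deferring exactly this verification to \cite{LamyCremonaSquareComplexes}, which is what the paper does anyway. In particular the phrase ``widening this strip to a $4\times 4$ grid by choosing three further parallel strips'' is not yet a construction, and the local-isometry check cannot be reduced to the interior vertices alone: one also needs the appropriate fullness/no-shortcut conditions in the links of the boundary vertices of the grid before the CAT(0) local-to-global argument applies.

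The parts you did execute are correct and consistent with the set-up of the paper: the inverse $g^{-1}(y_1,y_2,y_3,y_4)=(y_4,\,y_2-y_4^3,\,y_3-y_4^3,\,y_1-y_2y_4^2-y_3y_4^2+y_4^5)$ is right, hence $g\cdot[x_1]=[x_4]$, and your Step 4 is a valid way to extract hyperbolicity once the grid is in place: the isometric embedding gives $d(v,g^2v)=2\,d(v,gv)$, so the broken path $\bigcup_{n\in\bbZ} g^n[v,gv]$ is a local geodesic, hence a geodesic in the CAT(0) complex, and $g$ translates along it. One small correction to your framing: in the configuration of Figure \ref{fig:Only_Figure} the points $v, gv, g^2v$ are opposite corners and centre of the grid, so they lie on its Euclidean diagonal (a CAT(0) geodesic), not on ``a common combinatorial line'' of the grid; the alignment argument above is the right substitute. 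In short, your outline is a reasonable reconstruction of how the cited computation goes, but the heart of the lemma remains exactly the explicit grid construction and link verification done in \cite{LamyCremonaSquareComplexes}, which neither you nor the paper reproves.
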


 \begin{figure}[H]
 \begin{center}
  \scalebox{1}{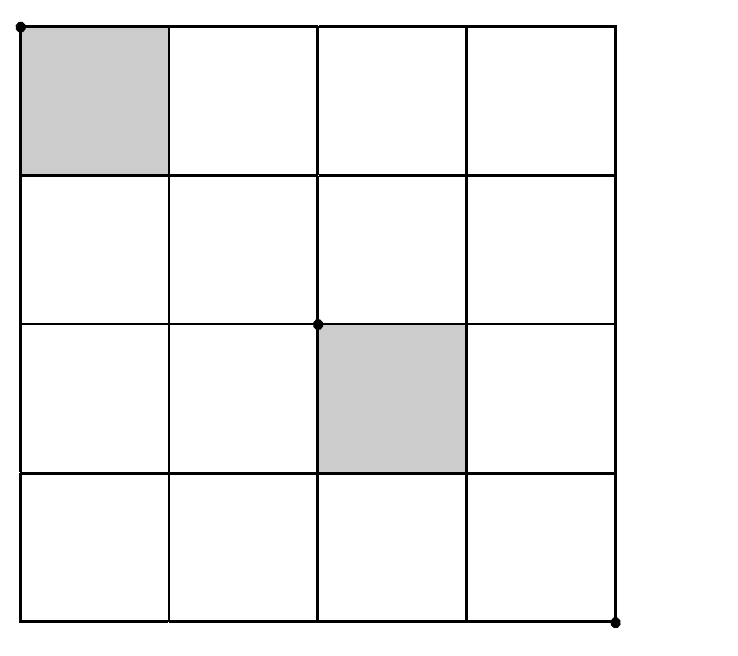}
 \caption{The $4 \times 4$ grid $K$ isometrically embedded in $X$, together with vertices $v$, $gv$ and $g^2v$ (black dots), and squares $C$ and $gC$ (shaded).}
 \label{fig:Only_Figure}
 \end{center}
 \end{figure}

\begin{proof}[Proof of Theorem \ref{thm:Tame_Acylindrically_Hyperbolic}]
 Let $v$, $K$ be the vertex and $4\times4$ grid of $X$ mentioned in Lemma \ref{lem:Hyperbolic_Isometry}. We show that we can apply the Criterion \ref{prop:Criterion_acylindrically_hyperbolic} to $g^2$ and $v$. Note that Item $1)$ of Criterion \ref{prop:Criterion_acylindrically_hyperbolic} follows from Proposition \ref{prop:unbounded_orbit}.

 We now show that  $v$ and $g^2v$ have a finite \textit{common stabiliser}, that is, that their stabilisers intersect along a finite subgroup. Let $C$ be the top-left square of $K$, as indicated in Figure \ref{fig:Only_Figure}. We start by showing that  $C$ and $gC$ have a finite common stabiliser. Indeed,  $\mbox{Stab}(C)$ is conjugated to the subgroup defined by elements of the form
 $$f(x_1,x_2,x_3,x_4)=(ax_1, b(x_2+cx_1), b^{-1}(x_3+dx_1), a^{-1}(x_4+cx_3+dx_2)), ~~a,b,c,d \in \bbC, ab \neq 0, $$ 
 by Proposition \ref{prop:Stab_Square}. Now an equation of the form $gf=f'g$, with $f,f'$ of the previous form, with coefficients $a,b,c,d$ and $a',b',c',d'$ respectively, yields the following equation, when isolating the first coordinate: 
 $$ a^{-1}(x_4 + cx_3 + dx_2)+ a^2b^{-1}x_1^2(x_3+dx_2) + a^2bx_1^2(x_2+cx_1) + a^5x^5 = a'(x_4 + x_3x_1^2 + x_2x_1^2 + x_1^5).$$
 
 Isolating the various monomials, we successively get $a^6=1, b^6 =1$ and $c=d=0$ (and analogous equalities for $a',b',c',d'$), hence $\mbox{Stab}(C) \cap g\mbox{Stab}(C)g^{-1}$ is finite, and thus $C$ and $gC$ have a finite common stabiliser.  This in turn  implies that $v$ and $g^2v$ have a finite common stabiliser. Indeed, the combinatorial interval between $v$ and $g^2v$ is exactly $K$, as combinatorial intervals embed isometrically in $\bbR^2$ with its square structure by \cite[Theorem 1.16]{PropertyACAT(0)CubeComplexes}. Thus, up taking a finite index subgroup, elements fixing $v$ and $g^2v$ will fix pointwise $K$, and in particular $C$ and $gC$.

 Since CAT(0) spaces a have bounded angle of view by Example \ref{ex:CAT0_angle}, one can thus apply  Criterion \ref{prop:Criterion_acylindrically_hyperbolic} to conclude. 
\end{proof}

\bibliographystyle{plain}
\bibliography{Higman_Cubical}

\end{document}